\theoremstyle{definition}
\newtheorem{definition}{Definition}[section]
\newtheorem{defi}[definition]{Definition}
\newtheorem{theorem}[definition]{Theorem}
\newtheorem{observation}[definition]{Observation}
\newtheorem{lemma}[definition]{Lemma}
\newtheorem{remark}[definition]{Remark}
\newtheorem{conj}[definition]{Conjecture}
\newcommand{\eps}{\varepsilon}
\newcommand{\bbP}{\mathbb P}
\DeclarePairedDelimiter\abs{\lvert}{\rvert}
\title{Bisection width, max-cut and internal partitions of 5-regular graphs}
\author{Endre Csóka\thanks{HUN-REN Alfréd Rényi Institute of Mathematics, Budapest, Hungary. },
Panna Tímea Fekete\thanks{HUN-REN Alfréd Rényi Institute of Mathematics, Budapest, Hungary. The author was supported by the ERC Synergy Grant No.\ 810115 -- DYNASNET. E-mail: {\tt fekete.panna.timea@renyi.hu}.},
 Zolt\'an L\'or\'ant Nagy\thanks{ELTE Linear Hypergraphs  Research Group,
		E\"otv\"os Lor\'and University, Budapest, Hungary. The author is supported by the University Excellence Fund of Eötvös Loránd University and the János Bolyai Research Scholarship of the Hungarian Academy of Sciences. 	E-mail: {\tt nagyzoli@cs.elte.hu}.},
Levente Szemerédi\thanks{HUN-REN Alfréd Rényi Institute of Mathematics, Budapest, Hungary; E\"otv\"os Lor\'and University, Budapest, Hungary. The author was supported by the EKÖP-24 university excellence scholarship program of the Ministry for Culture and Innovation from the source of the National Research, Development and Innovation Fund. E-mail: {\tt szelev@caesar.elte.hu}.}}
\date{}
\begin{document}
\maketitle

\begin{abstract}
In this paper, we 
present a new factor of IID process based on the local algorithm introduced by Díaz, Serna, and Wormald (2007). This new approach allows us to improve the previously known upper bounds on the minimum and maximum bisection width and the maximum cut of random $d$-regular graphs for $d>4$ by introducing a new recoloring phase after the termination of the original algorithm. 
As an application, we show that random 5-regular graphs asymptotically almost surely admit an internal partition, i.e., a partition of the vertex set into two nonempty classes so that every vertex has at least half of its neighbors in its own class. 
\end{abstract}

\section{Introduction}

For a given graph $G=(V,E)$, a {\em bisection} of $G$ is a partition of its vertex set $V$ into two disjoint parts $V_1$ and $V_2$ of equal or almost equal cardinality, $|V_1|=\left\lfloor \frac{|V|}{2}\right\rfloor$, $|V_2|=\left\lceil \frac{|V|}{2}\right\rceil$. The size of a bisection is the number of edges that cross between the two parts. A minimum bisection is a bisection of minimal size and its cardinality is called the {\em bisection width.}
It is well known that determining the bisection width is an NP-complete problem even for regular graphs \cite{johnson1979computers}.

The problem of determining the bisection width has a long history and is closely related to several important concepts and results in combinatorics, statistical physics, and computer science \cite{dembo2017extremal, diaz2007bounds, lyons2017factors, montanari2004instability, zdeborova2010conjecture}.
Following the work of Buser \cite{buser1978cubic}, Bollobás \cite{bollobas1988isoperimetric} gave bounds on the so-called isoperimetric number $$i(G):= \min_{U\subset V(G), |U|\leq |V|/2} |\partial{U}|/|U|$$  of $d$-regular graphs, where $\partial{U}$ denotes the edge set joining $U$ and $V\setminus U$. 
The isoperimetric number can be considered as a scaled version of the min-cut problem.\\ For the max-cut problem, the aim is to determine the vertex partition which provides the most crossing edges. This problem is also NP-complete \cite{johnson1979computers}.
On the other hand, determining the maximum edge-cut and determining the bisection width is conjectured to be closely related in a quantitative manner as well. 
\begin{conj}[Zdeborov\'a and Boettcher \cite{zdeborova2010conjecture}] 
In a random regular graph, the
size of the max-cut is asymptotically equal to the number of edges minus the bisection width, asymptotically almost surely. 
\end{conj}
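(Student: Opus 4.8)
\emph{To the best of my knowledge this conjecture is still open, so what follows is a proposal rather than a proof.}

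Write $\mathrm{mc}(G)$ for the max-cut, $\mathrm{bw}(G)$ for the bisection width, and $\mathrm{mb}(G)$ for the \emph{maximum} bisection (the largest cut among balanced partitions), and note $|E|=\tfrac{dn}{2}$. The conjectured identity $\mathrm{mc}(G)=|E|-\mathrm{bw}(G)+o(n)$ (asymptotically almost surely (a.a.s.) for a uniformly random $d$-regular graph) must be attacked as two one-sided estimates, and no inequality of this shape holds for general $d$-regular graphs — for disconnected examples with non-bipartite components one gets $\mathrm{mc}+\mathrm{bw}<|E|$, while for any bipartite $d$-regular graph with $\mathrm{bw}>0$ one gets $\mathrm{mc}+\mathrm{bw}>|E|$ — so the argument must genuinely exploit the local tree-likeness of the random model. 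I would first pass to an Ising reformulation: identifying a $2$-partition with $\sigma\in\{\pm1\}^{V}$ and setting $H(\sigma)=\sum_{\{i,j\}\in E}\sigma_i\sigma_j$, a partition has $\tfrac{|E|}{2}-\tfrac12 H(\sigma)$ crossing edges, so, writing $\max_{\mathrm{bal}},\min_{\mathrm{bal}}$ for optima over balanced $\sigma$,
\[
  \mathrm{mc}(G)=\tfrac{|E|}{2}-\tfrac12\min_\sigma H(\sigma),\qquad
  \mathrm{bw}(G)=\tfrac{|E|}{2}-\tfrac12\max_{\mathrm{bal}}H(\sigma),\qquad
  \mathrm{mb}(G)=\tfrac{|E|}{2}-\tfrac12\min_{\mathrm{bal}}H(\sigma).
\]
Granting the sub-claim $\mathrm{mc}(G)=\mathrm{mb}(G)+o(n)$ a.a.s.\ (the extremal cut is asymptotically balanced), the conjecture reduces to the complementarity statement $\max_{\mathrm{bal}}H(\sigma)=-\min_{\mathrm{bal}}H(\sigma)+o(n)$, i.e.\ the ferromagnetic and antiferromagnetic ground-state energies of $G$ at zero magnetisation are negatives of one another to leading order. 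This is exactly what local bipartiteness predicts: on any tree $T$ with proper $2$-colouring $\tau$, the map $\sigma\mapsto\sigma\tau$ sends $H_T(\sigma)$ to $-H_T(\sigma)$; a random $d$-regular graph is not globally bipartite, but the cavity/interpolation machinery that computes these energies only sees the local (tree) geometry.

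Concretely, the plan is to run the interpolation/cavity method in the spirit of Dembo, Montanari and Sen \cite{dembo2017extremal} (and Panchenko's work on diluted spin glasses), in three steps. \textbf{Step (a):} show that $\tfrac1n\min_\sigma H(\sigma)$ and $\tfrac1n\max_{\mathrm{bal}}H(\sigma)$ converge a.a.s.\ to Parisi-type variational constants $\mathcal P_{\mathrm{AF}}(d)$, $\mathcal P_{\mathrm F}(d)$ expressed through the $d$-regular Galton--Watson tree, with the balance constraint incorporated via a Lagrange term $h\sum_i\sigma_i$ plus a large-deviation correction, so that one really compares a family of \emph{constrained} Parisi functionals. \textbf{Step (b):} prove $\mathrm{mc}(G)=\mathrm{mb}(G)+o(n)$; the natural route is that the interpolation \emph{upper} bound on $\tfrac1n\min_\sigma H(\sigma)$ is insensitive to an $o(n)$ imbalance, so a near-optimal cut can be realised by a near-balanced $\sigma$ and then rebalanced at cost $o(n)$. \textbf{Step (c), the crux:} prove $\mathcal P_{\mathrm F}(d)=-\mathcal P_{\mathrm{AF}}(d)$ by showing that the tree $2$-colouring flip $\sigma\mapsto\sigma\tau$ lifts to a measure-preserving involution on the relevant replica-symmetry-breaking order parameter (a random probability measure on $[-1,1]$, or a Parisi hierarchy of such) that negates the functional — concretely, that the recursive distributional fixed-point equations for the antiferromagnet are carried to those of the ferromagnet under pushforward by the negation that alternates along the levels of the tree, with $h$ changing sign.

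The hard part is Step (c) — and, honestly, Step (a) is already at the research frontier, since the Parisi formula for Ising-type interactions on sparse random graphs is only partially established and the frustrated (max-cut) and constrained (bisection) cases are the difficult ones. Even with both variational formulas in hand, $\mathcal P_{\mathrm F}=-\mathcal P_{\mathrm{AF}}$ is not a formal consequence of the ``locally bipartite'' heuristic once the order parameter is a nontrivial random measure: one must check that the bipartite flip is compatible with the ultrametric structure of the Parisi ansatz and with the $h$-tilt forced by the balance constraint — and, because the minimum bisection is an interface/phase-separation quantity of the ferromagnet sitting exactly at its critical magnetisation, this compatibility is in effect about as deep as the conjecture itself. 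A reasonable warm-up is the $d\to\infty$ regime, where universality relating the sparse model to the Sherrington--Kirkpatrick model reduces the statement to the sign-symmetry $(g_{ij})\overset{d}{=}(-g_{ij})$ of Gaussian disorder (the balance constraint then costing only $O(\sqrt n)$); this leading-order version is essentially the content of \cite{dembo2017extremal}, so the genuinely new difficulty is the exact, finite-$d$ identity.

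Finally, I would note that the factor-of-IID / local-algorithm approach of the present paper cannot by itself close the gap: such constructions certify only one side of each estimate — upper bounds on $\mathrm{bw}$ and $\mathrm{mb}$, equivalently lower bounds on $\mathrm{mc}$ — and, by overlap-gap-type obstructions in the spirit of Gamarnik and Sudan, are provably bounded away from the true optima of these frustrated problems. Thus the local-algorithm route can push both bounds towards the conjectured common value but, as far as is currently known, cannot make them meet; bridging that last gap is exactly what the interpolation program above is meant to do.
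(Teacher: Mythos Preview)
You are correct that this statement is a conjecture, not a theorem: the paper does not prove it and offers no proof to compare against. It is quoted in the introduction as background, and the paper's own contribution is limited to showing (via the symmetric local algorithm) that any factor-of-IID bisection of expected cut density $\lambda$ yields a complementary max-bisection of density $1-\lambda$; this is exactly the ``one-sided, local-algorithm'' evidence you flag in your final paragraph as insufficient.

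One point worth noting is that the paper is in fact \emph{skeptical} of the conjecture. In Section~\ref{application} the first author gives a heuristic suggesting that the minimum bisection should have an asymmetric structure (start with a slightly unbalanced cut and then shift degree-neutral clusters to rebalance), while the maximum cut should be symmetric, leading him to conjecture that $\mathrm{mc}(G)+\mathrm{bw}(G)<|E|$ asymptotically. This bears directly on your Step~(c): the bipartite-flip symmetry $\sigma\mapsto\sigma\tau$ on the tree is perfectly compatible with the replica-symmetric or 1-RSB \emph{ansatz}, but the paper's heuristic is precisely that the true minimiser for the bisection problem may not be captured by a symmetric order parameter --- the balance constraint sits at a phase-separation point where asymmetric (domain-wall-type) configurations could beat symmetric ones. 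So the ``hard part'' you identify in Step~(c) may not just be technically hard but actually false at finite $d$, and your proposal should at least entertain that possibility rather than treat $\mathcal P_{\mathrm F}(d)=-\mathcal P_{\mathrm{AF}}(d)$ as the expected outcome.
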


 Bounds on the size of the max-cut and on the bisection width  in general and in random (regular) graphs were  proved via various techniques, and it received substantial interest, see e.g., \cite{alon2003maximum, balla2024maxcut, jin2025beyond, raty2023positive} and the references therein. One classical tool to tackle the problem, especially in the sparse regime,  is to apply \textit{local algorithms }from the constructive direction.

In such processes, we start from a graph with some independent and identically distributed (i.i.d.) random seeds assigned to each vertex. Then we perform a randomized algorithm by applying the same local rule at each vertex, that is, in parallel rounds, every vertex $w$ gets a new label based on the previous labels and the i.i.d.\ random seeds of the vertices having distance at most $r$ from $w$.
 
In this paper, we present a novel local algorithm which improves previous results on the size of the max-cut and on the bisection width of random $d$-regular graphs when $d>4$ is a constant.

Local algorithms and factor of i.i.d.\ processes turned out to be very useful in finding near-optimal solutions to several central problems in random graphs concerning graph parameters, such as the maximum independent set or maximum cut. However, there is evidence that this approach has limitations.  Chen, Gamarnik Panchenko, and Rahman \cite{chen2019suboptimality} showed that for random uniform hypergraphs of average degree large enough, local algorithms cannot find almost maximal cuts.
Recently, Basso et al. applied Quantum Approximate Optimization Algorithm (QAOA) to the Max-Cut problem on large-girth $d$-regular graphs \cite{basso2022quantum}.
El Alaoui, Montanari, and Sellke solved the problem of computing a "near" maximum cut or a "near" minimum bisection of $d$-regular graphs in polynomial time in \cite{el2023local}, where the error term depends on the degree $d$, and the number of vertices of the graph $n$, thus it can be considered as a completion of our work for $d\gg 1$.

Interestingly, while for random $n$-vertex $d$-regular graphs, the maximum cut value converges in probability as $n \to \infty$, the analogous statement is only conjectured for the bisection width when $d>2$ holds \cite{bayati2010combinatorial}. 
For numerous applications concerning the problems above in the fields of computer science, statistical physics, and bioinformatics, we refer to \cite{ coja2022ising, klamt2004minimal, poljak1995maximum, sharan2003click}. 

In \cite{wormald1999models}, Wormald described a technique developed for translating many results for random $d$-regular graphs to $d$-regular graphs
with sufficiently large girth via analyzing the performance of so-called locally greedy algorithms for random regular graphs. 
Concerning the bisection width and the maximum cut, the algorithms and their analysis are due to Díaz, Serna, Do, and Wormald, and are detailed in \cite{diaz2003bounds} and \cite{diaz2007bounds}. For more recent algorithm variants, we refer to \cite{diaz2024minimum}.
Their main tool during the analysis is the celebrated \textit{differential equation method}. The approach was extended to several other graph parameters due to Hoppen and Wormald \cite{hoppen2018local}.

We note that such problems can also be approached in an alternative manner.
The asymptotic minimum bisection width of random $d$-regular graphs on $n \to \infty$ vertices which can be constructed by local algorithms, is equal to the minimum bisection width of the corresponding so-called Bernoulli-graphing.
Graphings are fundamental objects in sparse graph limit theory, see \cite{hatami2014limits}. 
This theory makes it obvious that these algorithms provide the same asymptotic bisection width for all graph sequences with large essential girth, because they all converge to the same local limit.
The same applies to the maximum cut, and several other graph parameters as well.
The main idea of our proof was inspired by this view; however, we decided not to follow this direction and use the limit object, as it would require to build a technical framework. 

The purpose of this paper is threefold. First, we revisit the local greedy algorithm of Díaz, Serna, and Wormald to add \textit{a terminal phase}, which enables us to improve their (lower) bounds, which yields the best lower bounds on the bisection width in random $d$-regular graphs up to our knowledge for every $d>4$. 

Secondly, we describe a \textit{new variant} of the aforementioned \textit{random greedy algorithm} in which linearly many vertices are allocated in two partition classes in each step, and hence the procedure terminates in a finite number of steps.
This approach has its roots in statistical physics and in graph limit theory, and it may be of independent interest. It also has some similarity to that of Lyons \cite{lyons2017factors}, and of Lichev and Mitsche, who hold the record for the case of cubic graphs \cite{lichev2023minimum}.
We formulate this algorithm for the  $d$-regular case for arbitrary integer values $d>2$, and present quantitative results for $d=5$. 

We also introduce a \textit{new technique to analyze} the performance of more cumbersome local algorithms, where the local rule may depend not only on the strict neighborhoods of the vertices.
 More importantly, the analysis of the coloring algorithm is obtained on the infinite $d$-regular tree, and we argue that by calculating the corresponding distribution of types over time over this object, the error term we made is negligible with high probability compared to the distribution over random $d$-regular graphs. 
 
Together with the terminal phase, we believe that the performance analysis would be hard to execute via the (standard version of the) differential equation method. 

 	As an application, we derive a result on the size of the maximum cut and on internal partitions of regular graphs. An {\em internal} or {\em friendly} partition of a graph is a partition of the vertices into two nonempty sets so that every vertex has at least as many neighbors in its own class as in the other one. It is known due to Linial and Louis \cite{linial2020asymptotically} that almost all $d$-regular graphs admit an internal partition if $d$ is even, and a similar statement holds for $d=3$. However, the problem with larger odd values of $d$ remained open. We derive the analogue result for $d=5$.

These ideas lead to the following results.

\begin{theorem}\label{bisect}

The bisection width of a random 5-regular graph on $n$ vertices is asymptotically almost surely (a.a.s.) smaller than $0.496488n$.

\end{theorem}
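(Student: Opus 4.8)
The plan is to prove Theorem~\ref{bisect} by combining the existing locally greedy algorithm of Díaz, Serna, and Wormald with an additional terminal recoloring phase, and to carry out the analysis on the infinite $5$-regular tree rather than directly on the random graph. First I would recall the Díaz--Serna--Wormald process: starting from i.i.d.\ seeds on $V$, vertices are colored in two classes by a local greedy rule, in a way that keeps the two color classes of (asymptotically) equal size while minimizing the number of monochromatic-boundary edges created. The differential equation method yields the distribution of vertex \emph{types} (e.g.\ the number of already-colored same-class versus other-class neighbors) at the termination of this first phase, and in particular the expected density of cut edges. This already gives a bound on the bisection width; the improvement comes from observing that after termination there are still many vertices whose local configuration is ``wasteful'' --- a vertex contributing more cut edges than necessary, or a pair of adjacent vertices that could be swapped --- and a further local rule can flip some of these while preserving the balance of the bisection up to $o(n)$.

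The key steps, in order, would be: (1) set up the type distribution produced by the first phase as a measure on rooted configurations of the $5$-regular tree, tracking for each vertex the split of its five neighbors into the two classes together with the relevant seed information; (2) design the terminal recoloring rule, specifying exactly which local patterns trigger a flip and in what priority order, making sure the rule is itself a finite-radius factor of the i.i.d.\ process so that it defines a legitimate local algorithm; (3) compute, on the tree, the change in the expected number of cut edges caused by this recoloring phase --- this is where one integrates or sums over the type distribution and the flip probabilities; (4) verify that the recoloring changes the sizes of the two classes by only $o(n)$ in expectation and with high probability, so that a negligible final adjustment restores an exact bisection while adding only $o(n)$ edges; and (5) transfer the tree computation to the random $5$-regular graph, arguing that since a random $5$-regular graph converges locally to the $5$-regular tree, the empirical type distribution and cut-edge count concentrate around their tree values, with the discrepancy being $o(n)$ a.a.s. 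Plugging in the numerics for $d=5$ should give the stated constant $0.496488$.

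The main obstacle I expect is step~(3) together with step~(2): one must choose the terminal rule carefully enough that the flips genuinely decrease the cut while not creating new wasteful configurations that cancel the gain, and then track the resulting type distribution through the recoloring. Because the recoloring rule depends on more than the immediate neighborhood (a flip at $v$ changes the type of $v$'s neighbors, which may cascade), the analysis cannot use the plain differential equation method; instead I would formalize the recoloring as a one-shot (or finitely-iterated) local map on the tree and compute expectations directly against the known type measure, handling the dependence between neighboring flips by an inclusion--exclusion or by processing flips in rounds with a fixed priority so that conflicts are resolved deterministically. A secondary technical point is the concentration transfer in step~(5): one needs that the error between the tree-based prediction and the graph-based quantity is $o(n)$ with high probability, which follows from local convergence plus bounded-degree Azuma/Lipschitz concentration, but must be stated carefully because the recoloring rule has radius larger than one.
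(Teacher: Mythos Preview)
Your outline matches the paper's at the top level --- greedy coloring, a terminal recoloring, analysis on the infinite $5$-regular tree, then transfer to the random graph --- but it misses the specific mechanism that makes the recoloring phase analyzable, and it leans on the differential equation method in a place where the paper explicitly avoids it.

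The recoloring step in the paper is \emph{not} a local flipping rule processed in rounds with conflicts resolved by inclusion--exclusion. Instead, the paper defines \emph{miscolored} vertices (e.g.\ a vertex colored red while it had a red majority among its colored neighbors, but whose terminal type has a blue majority), forms the bipartite graph $H$ between miscolored red and miscolored blue vertices, and then swaps the colors of a large \emph{bihole} (balanced independent set) in $H$. Because the swapped set is independent in $H$, each swap decreases the cut by at least one with no cascading at all (Lemma~\ref{indep}); because it is balanced, the bisection stays exact, so your step~(4) becomes vacuous. The size of this bihole is lower-bounded by the Axenovich--Sereni--Snyder result (Theorem~\ref{biholeswap}), which for maximum degree $3$ gives at least a $0.34116$ fraction of each side. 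The only quantity one must compute on the tree is therefore the density of miscolored vertices, not the full joint distribution of flips; plugging $\mu(3)\cdot(\text{miscolored density})$ into the first-phase cut gives $0.496488$. Your inclusion--exclusion/rounds plan is exactly the complication this bihole trick is designed to bypass, and without it you would have a hard time certifying any concrete constant. Note also that the bihole step is \emph{global}; you do not need (and the paper does not claim) the second phase to be a finite-radius factor --- the goal is an upper bound on bisection width, not a local construction.

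On the analysis side, the paper does \emph{not} use the differential equation method for the type distribution. It reformulates the greedy phase so that an $\eps$-fraction of vertices is colored per round (for a fixed small $\eps$, hence $N\approx 1/\eps$ rounds), and then tracks the law $\mu_t$ of the coloring restricted to the depth-$2$ neighborhood of the root in the infinite tree, via explicit one-step transition probabilities (Section~\ref{transition}). The reason depth $2$ suffices is a conditional-independence lemma (Lemma~\ref{lemma:condindep}): across an edge whose endpoints are both still uncolored, the two halves of the tree evolve independently. This is what lets the paper compute the miscolored density without integrating an ODE and, in particular, get the terminal-type statistics needed for the bihole count. Your step~(5) is essentially what the paper does, using the Bollob\'as--Wormald Poisson limit for short cycles (Lemma~\ref{lemma:BW_poisson}) to bound the fraction of vertices whose radius-$R$ ball is not a tree.
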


\begin{theorem}\label{maxcut}
 The size of the maximum cut of a random 5-regular graph on $n$
vertices is a.a.s. larger than $2.003n$.
\end{theorem}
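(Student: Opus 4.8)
The plan is to prove Theorem~\ref{maxcut} constructively: I will exhibit, via the factor-of-IID scheme developed in this paper, a $2$-colouring of the vertices whose cut has size at least $2.003n$ a.a.s. Since the max-cut is the largest cut of the graph, this suffices. The algorithm is the max-cut analogue of the one underlying Theorem~\ref{bisect}. Every vertex receives an i.i.d.\ seed; in the main phase the vertices are coloured in the order given by the seeds (in linearly-sized batches, as in the paper's second algorithm), and a newly coloured vertex is sent to the side on which it currently has the \emph{fewer} already-coloured neighbours, so that each colouring step adds the maximum possible number of crossing edges; the balancing constraint used for bisections is simply dropped, as we only need a large cut. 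After termination we run the same terminal recolouring phase: while some vertex has a strict majority of its neighbours on its own side, flip it. Each such flip increases the number of crossing edges by at least one, so the phase terminates, and the composition of all the rules is again a factor of IID, hence its output is a legitimate cut whose size lower-bounds the max-cut.

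The quantitative analysis is the mirror image of that for Theorem~\ref{bisect}, carried out on the infinite $5$-regular tree $T_5$. I would track the evolution in time of the distribution of vertex \emph{types}, where the type of a not-yet-coloured vertex records the states of its five neighbours (uncoloured, or coloured together with the sign of that neighbour's current local discrepancy), write down the master equations for how these type-probabilities change as successive batches are coloured, and solve the system to its terminal state. From the terminal distribution one reads off the law of the ordered pair of final colours on the two endpoints of a typical edge, and hence the expected fraction $p$ of cut edges; the terminal phase then raises $p$ to some $p'$, and the theorem reduces to the numerical inequality $p'\cdot 2.5n \ge 2.003n$, i.e.\ $p' \ge 0.8012$. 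Incorporating the terminal phase into this bookkeeping is precisely where a vertex's final colour stops being a function of its strict neighbourhood: one enlarges the type space so that the final colour becomes a bounded-radius function of the surrounding seeds up to an exponentially small tail, which is exactly the regime the paper's new analysis technique is designed to handle.

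Finally I would transfer the computation from $T_5$ to random $5$-regular graphs. Such graphs converge locally (in the Benjamini--Schramm sense) to $T_5$ and a.a.s.\ have only $o(n)$ vertices on short cycles, so all but $o(n)$ vertices receive the colour that the same local rule assigns on $T_5$; a Wormald-type martingale/variance estimate, adapted to the batch algorithm and the terminal phase, then shows that the number of crossing edges is concentrated around $p'n$ with error $o(n)$ a.a.s. Combining the three ingredients, the constructed cut --- and therefore the max-cut --- has size at least $2.003n$ a.a.s.

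The main obstacle I expect is the terminal recolouring phase. In the main phase the tree analysis is clean because every vertex is coloured exactly once on the basis of already-coloured neighbours, so correlations stay under control; recolouring is a second sweep in which flips can cascade, the effective ``type'' of a vertex must encode a genuinely larger neighbourhood, and the error from replacing the random graph by $T_5$ has to be re-bounded for this richer rule. Certifying the resulting fixed-point system accurately enough to land the explicit constant $2.003$ --- rather than merely something above $2$ --- is the delicate numerical part. One might hope to bypass all of this by deducing Theorem~\ref{maxcut} from Theorem~\ref{bisect} via the Zdeborov\'a--Boettcher prediction that the max-cut is asymptotically $|E|$ minus the bisection width (note that $2.5 - 0.496488 = 2.003512 > 2.003$, so the stated bound is exactly what that prediction gives), but since that relation is only conjectural the honest route is the constructive, ``dual'' one above.
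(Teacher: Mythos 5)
Your proposal takes a genuinely different route from the paper's, and it also contains a gap that the paper avoids entirely. You propose to build and analyze a \emph{new} greedy max-cut algorithm from scratch (colour each vertex against the majority of its already-coloured neighbours, drop the balancing constraint, add a flip-until-fixed-point terminal sweep), re-derive the transition dynamics on $T_5$, and re-verify the constant numerically. None of this is carried out, so the specific constant $2.003$ is not certified; moreover the flip-until-fixed-point sweep is not obviously a bounded-radius factor-of-IID rule (flips can cascade arbitrarily far), so the error-transfer argument from $T_5$ to random graphs would have to be re-established from scratch, which is nontrivial.

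More importantly, you noticed the numerology $2.5-0.496488=2.003512>2.003$ and then dismissed the deduction from Theorem~\ref{bisect} because you believed it requires the (conjectural) Zdeborov\'a--Boettcher relation. That is exactly the wrong call: the paper does \emph{not} invoke the conjecture. Instead it exhibits an explicit coupling between the bisection algorithm $A$ and a dual algorithm $B$ obtained by (i) applying a measure-preserving involution $f$ to the seeds, which swaps the two sub-intervals used in the symmetric-dominant-type step, and (ii) making each vertex ``colourblind'', i.e.\ perceiving every coloured neighbour as the opposite colour. Under this coupling, at every step an edge becomes monochromatic for $A$ if and only if it becomes bichromatic for $B$, and in the recolouring phase the same vertices are swapped (increasing $B$'s cut rather than decreasing it). Hence $|\mathrm{cut}(A)|+|\mathrm{cut}(B)|=|E|=2.5n$ deterministically on each realization, and Theorem~\ref{bisect} gives $|\mathrm{cut}(B)|\ge 2.5n-0.496488n>2.003n$ a.a.s. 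This is a rigorous two-paragraph argument with no new tree analysis and no new numerics; the Zdeborov\'a--Boettcher conjecture concerns the true optimum values, whereas the coupling identity holds for the specific algorithmic outputs, which is all one needs for a lower bound on max-cut. You should rebuild your proof around this duality rather than a fresh constructive analysis.
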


As a consequence of Theorem \ref{bisect}, we show the following theorem.
\begin{theorem}\label{internal}
 Asymptotically almost every $5$-regular graph has an internal partition.
\end{theorem}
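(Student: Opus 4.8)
The plan is to derive Theorem~\ref{internal} from Theorem~\ref{bisect} alone, by a short local-search argument whose only quantitative input is that $0.496488<1/2$. First I would recall what is needed: for a $5$-regular graph an internal partition is simply a partition $(A,B)$ into two nonempty classes in which every vertex has at most $2$ of its $5$ neighbours in the opposite class; crucially, it is not required to be balanced.

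By Theorem~\ref{bisect}, a.a.s. the random $5$-regular graph $G$ on $n$ vertices has a bisection $(A_0,B_0)$ with fewer than $0.496488\,n$ crossing edges, and I would condition on this event. Starting from $(A_0,B_0)$ I would run the following process: while the current partition $(A,B)$ is not internal, pick any vertex $v$ with at least $3$ neighbours across and move it to the opposite class. Such a move changes the number of crossing edges by $5-2k$, where $k\ge 3$ is the number of $v$'s neighbours across; hence the cut strictly decreases by at least $1$ at each step. Since the number of crossing edges is a nonnegative integer that starts below $0.496488\,n$, the process stops after fewer than $0.496488\,n$ moves, necessarily at an internal partition.

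The remaining point — and the only one requiring any care — is that the terminal partition must still have both classes nonempty. Each move shifts exactly one vertex, so it changes $|A|$ by $\pm 1$; over the whole run $|A|$ therefore deviates from its initial value, which is $\lfloor n/2\rfloor$ or $\lceil n/2\rceil$, by strictly less than $0.496488\,n$. Consequently, at every stage and in particular at termination, both $|A|$ and $|B|$ lie strictly between $0$ and $n$ for all large $n$, so neither class is ever emptied. This produces an internal partition of $G$, proving the theorem.

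I do not anticipate a genuine obstacle here: the argument is essentially a monovariant bound on the number of steps, and the only content of Theorem~\ref{bisect} that is actually used is that the bisection width is below $n/2$ — precisely the fact that rules out draining a class. It is worth isolating the clean deterministic implication: every $5$-regular $n$-vertex graph possessing a bisection of size less than $n/2$ admits an internal partition.
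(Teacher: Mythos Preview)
Your argument is correct. The paper's proof follows the same route but outsources the deterministic step: it invokes a result of B\"arnkopf, Nagy, and Paulovics (building on an observation of Bazgan, Tuza, and Vanderpooten) that a $5$-regular graph with a bisection of size at most $n/2+5$ has an internal partition, and then applies Theorem~\ref{bisect}. You instead prove this implication directly, via the same cut-decreasing local search that presumably underlies the cited result; your version needs only the threshold $n/2$ (rather than $n/2+5$), which is all that Theorem~\ref{bisect} provides anyway. The advantage of your write-up is that it is self-contained and makes explicit exactly which feature of Theorem~\ref{bisect} is used---namely, that the bisection width dips below $n/2$---while the paper's version is terser but relies on an external reference.
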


\bigskip

The organisation of the paper is as follows. In Section 2 we first describe the greedy coloring algorithm of Díaz, Serna, and Wormald and present a new recoloring phase which enables us to improve the bound on the bisection width. Then we introduce the new local algorithm, which, instead of coloring vertices in pairs, proceeds by coloring in a constant number of rounds, independently of the size of the vertex set. Then we prove Theorem \ref{bisect} by the analysis of the algorithm.

 Finally, in Section 3, we derive Theorem \ref{maxcut} and \ref{internal} as consequences of Theorem \ref{bisect}, and in Section 4, we discuss possible further improvements.

\section{A refined version of the Díaz-Serna-Wormald algorithm}

 In this section, we describe a coloring algorithm that gives us a small cut with (almost) equal sizes for regular graphs. First, we revisit a greedy coloring introduced by Díaz et al. \cite{diaz2007bounds} and describe a new phase for the algorithm. In this phase, we recolor some vertices, whose cardinality is proportional to $n$, in order to obtain a bisection of smaller size. 

Let us introduce some notation. We denote by $N(v)$ the neighbors of the vertex $v$, $N(v):=\{w\in V(G): vw\in E(G)\}$, while the closed neighborhood of $v$ is defined by $N[v]:=N(v)\cup\{v\}$. We define the $k$-neighborhood $N^k[v]$ as the set of vertices whose graph distance from $v$ is at most $k$, including the vertex $v$. The neighbors and $k$-neighborhood of a set of vertices is the union of the neighbors and $k$-neighborhoods of the individual vertices, respectively. $\Delta(G)$ denotes the maximal degree of the graph $G$.

 A \textit{bihole} in a bipartite graph $G=(A\cup B, E)$ is a balanced independent set $I$, that is, $|I\cap A|=|I\cap B|$.

\subsection{Greedy coloring}\label{subsec:greedy}

 Díaz et al. \cite{diaz2007bounds} analysed a randomized greedy algorithm which proceeds as follows.\\

 \noindent \textbf{Díaz-Serna-Wormald algorithm to obtain small bisection width.}
Take a $d$-regular graph $G$.
The vertices of $G$ are put into two bipartition classes, denoted by red and blue, in pairs.
Given a partial coloring of $G$ with colors red and blue, we classify its uncolored vertices according to the number of their red and blue neighbors. A vertex is \textit{of type $(r,b)$} if it has $r$ red and $b$ blue neighbors. We call two uncolored vertices a \emph{symmetric pair} if one of them is of type $(x,y)$ and the other is of type $(y,x)$.

At each step, we proceed by choosing a symmetric pair with types $(r,b)$ and  $(b,r)$ and color red the one with more red neighbors, the other blue. If they have the same number of red and blue neighbors, then one is colored red, the other blue, uniformly at random.
The values of $r$ and $b$ in the chosen type $(r,b)$ are determined according to a \textit{priority ordering for types}, and vertices are chosen uniformly at random from those which have the highest priority.

The priority order of types is defined as follows:
\begin{itemize}
\item type $(r,b)$ and type $(b,r)$ has the same priority,
\item if $r_1\leq b_1$ and $r_2\leq b_2$ then the priority of type $(r_1,b_1)$ is less than the priority of type $(r_2,b_2)$ if and only if either $b_1-r_1<b_2-r_2$, or both $b_1-r_1=b_2-r_2$ and $r_1<r_2$.
\end{itemize}

If there are no more symmetric pairs left, color half of the remaining vertices red and the other half blue uniformly at random. 

\subsection{A novel (terminal) phase: swapping wrongly placed vertices}\label{sec:novelphase}

 We introduce the notion of \textit{terminal types} for the vertices. A vertex is of {\em terminal type} $(r',b')$ if it has $r'$ red and $b'$ blue neighbors after the greedy coloring procedure terminates.
 Note that in the algorithm above, only uncolored vertices were clustered into types, while the terminal type is introduced for the colored vertices at the end of the algorithm, hence $r'+b'=d$ must hold.
\begin{defi}[Miscolored vertices]	Let us call those vertices {\em miscolored red $(r,b)$ vertices}, which were colored red while being of type $(r,b)$ with $r> b$, but have terminal type $(r',b')$ with $r'<b'$,  and likewise call {\em miscolored blue $(r,b)$ vertices} those which were colored blue while being of type $(r,b)$ with $b> r$, but have terminal type $(r', b')$ with $b'<r'$. 
\end{defi}

 In order to get a bisection with a smaller size compared to the one obtained from the greedy coloring, the idea is to recolor a subset of miscolored red and blue vertices to the opposite colors.

\begin{lemma}\label{indep} Suppose that $I$ is an independent set in the bipartite graph induced by partition classes, defined by the miscolored red and blue vertices. Iteratively replacing each vertex of the independent set to the opposite class decreases the number of edges in the cut defined by the red-blue coloring, hence the size of the bisection decreases by at least $|I|$ at the end of the swapping procedure.
\end{lemma}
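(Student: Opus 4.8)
The plan is to track the size of the cut while performing the swaps one vertex at a time, in an arbitrary order, and to show that every single swap strictly lowers the number of cut edges, by at least one; summing over the $|I|$ swaps then gives the bound. The role of the independence hypothesis is exactly to control, for the vertex about to be flipped, which of its neighbours may already have been recoloured.

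First I would record the defining property of miscolored vertices: a miscolored red vertex has terminal type $(r',b')$ with $r'<b'$, so in the colouring produced by the greedy phase it has strictly more blue than red neighbours, and being distinct integers, $r'-b'\le -1$; symmetrically, a miscolored blue vertex has strictly more red than blue neighbours. Now fix an arbitrary order on $I$ and recolour its vertices one by one. Consider the moment we recolour a miscolored red vertex $v\in I$ from red to blue. A neighbour of $v$ that is not miscolored was never in $I$, hence still carries its terminal colour. A miscolored blue neighbour of $v$ cannot lie in $I$ either: the edge joining $v$ to it runs between the two classes of the bipartite graph, hence is one of its edges, and no edge of that graph may have both endpoints in the independent set $I$. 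So the only neighbours of $v$ that may already have been recoloured are miscolored red neighbours lying in $I$, each of which has been moved from red to blue. If $a\ge 0$ of them have been flipped so far, then at this moment $v$ has $r'-a$ red and $b'+a$ blue neighbours, so recolouring $v$ changes the number of cut edges by $(r'-a)-(b'+a)=(r'-b')-2a\le r'-b'\le -1$.

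The mirror-image computation handles a miscolored blue vertex $w\in I$: at the moment it is recoloured blue to red, its only possibly already-recoloured neighbours are miscolored blue vertices of $I$, each now moved blue to red, so $w$ has $b'-a$ blue and $r'+a$ red neighbours for some $a\ge 0$, and the flip changes the cut by $(b'-a)-(r'+a)=(b'-r')-2a\le -1$. Therefore each of the $|I|$ swaps decreases the cut by at least one, and the cut of the resulting red--blue colouring is smaller by at least $|I|$ than the original one. If in addition $I$ is balanced between the two colour classes --- which is how the lemma gets applied, via biholes --- then the class sizes are unchanged, so the new colouring is again a bisection, whose size has dropped by at least $|I|$.

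The one genuinely delicate point, and the step I would check most carefully, is that independence of $I$ in the bipartite graph excludes exactly the configuration that could reverse a per-swap gain, namely two adjacent miscolored vertices of opposite colours both lying in $I$. Two adjacent vertices of the \emph{same} miscolor may both lie in $I$, but this only helps: flipping both keeps that edge monochromatic throughout the process, and an already-flipped same-colour neighbour shifts the colour counts so that $(r'-b')-2a$ becomes only more negative, i.e.\ it can enlarge but never shrink the decrease. Everything else is the elementary bookkeeping of a single colour flip, which changes the cut by the number of neighbours sharing the flipped vertex's old colour minus the number of neighbours with the other colour, both counted immediately before the flip.
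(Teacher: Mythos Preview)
Your proof is correct and follows the same idea as the paper's: independence in the bipartite graph guarantees that no opposite-colour miscolored neighbour of $v$ lies in $I$, so flipping $v$ still faces a strict majority of the ``wrong'' colour and lowers the cut by at least one. You are in fact more careful than the paper, which asserts that ``the color of its neighbors $N(v)$ stay intact'' without addressing same-miscolor neighbours inside $I$; your observation that such neighbours only push $(r'-b')-2a$ further negative fills exactly that gap.
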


\begin{proof} Since for every vertex $v\in I$, the color of its neighbors $N(v)$ stay intact during the replacement. Thus, the replacement  of every vertex $v\in I$ contributes independently by at least $1$ to the decrement of the size of the cut.
\end{proof}

\begin{theorem}[Axenovich et al., 2021. \cite{axenovich2021bipartite}] \label{biholeswap}
 For any fixed positive integer $\Delta$, there exists an integer $k_0(\Delta)$ such that the following holds.
Suppose that $H$ is a balanced bipartite graph on $k+k$ vertices and has maximal degree $\Delta$ where $k\ge k_0(\Delta)$. Then $H$ contains a bihole of size $\mu(\Delta)k+\mu(\Delta)k$,
where $\mu(3)>0.34116,$ $ \mu(4)>0.24716$, and in general, $\mu(\Delta)\ge c\frac{\log{\Delta}}{{\Delta}}$ for some positive constant $c>0$.
\end{theorem}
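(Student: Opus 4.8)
To prove a statement of this form, the plan is to exhibit a bihole directly, by a probabilistic construction in which balance is built in from the start. Write $A,B$ for the two sides of $H$, each of size $k$; there is no need to assume $H$ regular. Put every vertex of $A$ into a random set $S$ independently with probability $p$ (a parameter to be chosen), and set $T:=B\setminus N(S)$, the vertices of $B$ having no neighbour in $S$. Then $S\cup T$ is automatically an independent set — there are no edges inside $A$ or inside $B$, and none between $S$ and $T$ by construction — and it respects the bipartition, $S\subseteq A$ and $T\subseteq B$. We have $\bbE\abs{S}=pk$, and, since every vertex of $B$ has degree at most $\Delta$, $\bbE\abs{T}=\sum_{b\in B}(1-p)^{\deg b}\ge k(1-p)^\Delta$. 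Both $\abs{S}$ (a binomial) and $\abs{T}=k-\abs{N(S)}$ (a $\Delta$-Lipschitz function of the $k$ independent inclusion choices) concentrate around their means up to $O(k^{2/3})$ fluctuations, so with high probability $\abs{S}\ge pk-o(k)$ and $\abs{T}\ge k(1-p)^\Delta-o(k)$. Discarding vertices from whichever of $S,T$ is larger until the two sides are equal produces a bihole of size $2\min(\abs{S},\abs{T})\ge 2k\min\bigl(p,(1-p)^\Delta\bigr)-o(k)$. Since $\min\bigl(p,(1-p)^\Delta\bigr)$ is maximised at the unique $p=p_\Delta\in(0,1)$ with $p_\Delta=(1-p_\Delta)^\Delta$, for every $\eps>0$ and all $k\ge k_0(\Delta,\eps)$ — the threshold just has to absorb the $o(k)$ error — we get $\mu(\Delta)\ge p_\Delta-\eps$. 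Writing $p_\Delta=c_\Delta/\Delta$, the relation $c_\Delta+\ln c_\Delta=\ln\Delta$ forces $c_\Delta=\Theta(\log\Delta)$, giving the general bound $\mu(\Delta)\ge c\,\frac{\log\Delta}{\Delta}$.

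This crude generic-subset estimate already gives $p_3\approx 0.318$ and $p_4\approx 0.276$; the latter is enough for $\Delta=4$, but for $\Delta=3$ it falls short of the claimed constant, so a finer argument is needed there (and would also sharpen the bound for $\Delta=4$). One route keeps the bipartite selection but exploits the bounded degree more carefully: analyse the extremal quantity $\max_{S\subseteq A}\min\bigl(\abs{S},\,k-\abs{N(S)}\bigr)$ via the expansion profile of $H$ (equivalently, via the Hall-type surplus $\abs{N(X)}-\abs{X}$), bounding how small $\abs{S}+\abs{N(S)}$ can be when every vertex has degree exactly $3$; or replace the independent inclusions by a correlated/greedy selection, or by an occupancy-fraction (hard-core model) computation, tuned to small $\Delta$. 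A second route starts from a near-maximum independent set $I$ produced by a strong independence-ratio theorem for triangle-free bounded-degree graphs (bipartite graphs being triangle-free), and then rebalances $I$ by an exchange argument rather than by crude truncation, showing that vertices can be traded between the two sides of $I$ with only a controlled loss.

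The main obstacle is the balance requirement itself. Finding a large independent set in a bounded-degree — indeed triangle-free — graph is routine, but in a bipartite graph the largest independent sets can lie entirely on one side, so a large bihole is a genuinely different object; the crux is to guarantee a large \emph{balanced} independent set and to forfeit as little as possible when enforcing balance. The probabilistic construction sidesteps this at the cost of a suboptimal constant, while the sharp constants require either a delicate optimisation of the one-sided-set versus non-neighbourhood trade-off or a careful exchange argument; either way one must check that all error terms are $o(k)$, which is exactly why the hypothesis $k\ge k_0(\Delta)$ appears in the statement.
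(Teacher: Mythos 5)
This theorem is not proved in the paper: it is quoted as an external black-box result of Axenovich et al.\ and cited, so there is no in-paper argument to compare yours against. I will therefore evaluate your sketch on its own terms, with an eye to whether it would suffice for the use the paper makes of it.

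The probabilistic construction you give is sound: pick $S\subseteq A$ by independent coin flips of bias $p$, set $T=B\setminus N(S)$, observe that $S\cup T$ respects the bipartition and is independent, note $\bbE\abs{S}=pk$ and $\bbE\abs{T}\ge k(1-p)^{\Delta}$, and use Lipschitz concentration (changing one coin moves $\abs{N(S)}$ by at most $\Delta$) plus a union bound to find a realization with both $\abs S$ and $\abs T$ within $o(k)$ of their means; truncating the larger side then gives a bihole of size $k\min\bigl(p,(1-p)^{\Delta}\bigr)-o(k)$ on each part. Optimizing at $p_\Delta=(1-p_\Delta)^{\Delta}$, with $p_\Delta=\Theta(\log\Delta/\Delta)$ by the Lambert--$W$ asymptotics, does give the stated general bound $\mu(\Delta)\ge c\log\Delta/\Delta$, and incidentally $p_4\approx 0.2755$ already exceeds the quoted $\mu(4)>0.24716$.

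The genuine gap is the one you yourself flag: this yields only $p_3\approx 0.3177$, short of the claimed $\mu(3)>0.34116$. That is precisely the constant the paper relies on --- in the $5$-regular application the bipartite graph of terminal-type-$(2,3)/(3,2)$ miscolored vertices has maximum degree $3$, and the improved cut size $0.496488$ in Theorem~\ref{bisect} is computed with the factor $0.34116$. Your two suggested refinements (a Hall-surplus/expansion analysis, an occupancy-fraction or hard-core-model computation tuned to $\Delta=3$, or an exchange argument starting from a large independent set in a triangle-free bounded-degree graph) are plausible directions, but none is carried out; the hard part of the theorem --- extracting nearly the full independence ratio while \emph{simultaneously} forcing balance, rather than paying the truncation cost --- is exactly what is left open. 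As written, the proof establishes the qualitative statement and the $\Delta=4$ case but not the $\Delta=3$ constant that the rest of the paper actually uses.
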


\subsection{The new algorithm}\label{subsec:newalg}

While the local algorithm of Díaz, Serna, and Wormald proceeded by coloring the vertices pair after pair, we apply a coloring procedure where we aim to color a subset of uncolored vertices of size approximately proportional to the whole vertex set, but keeping the priority-based rules as follows.

For the first (greedy) phase, take a positive, small real number $\eps>0,$ which we think of as the expected step size. 
 The number of steps is denoted by by $N\approx\eps^{-1}$, which is specified in Subsection \ref{transition}.
For each step $t$, let us introduce its \emph{dominant type} $D_t=\{r^d_t,b^d_t\}$, which is an unordered type of uncolored vertices.
At this point, we should only think of the list of dominant types $D_1,D_2,\ldots,D_{N}$ as parameters of the algorithm, we discuss how to determine them in Subsection \ref{transition}.
When we need the additional information whether a vertex of the dominant type has more red or blue neighbors, we denote the ordered dominant type $D_t^r$ and $D_t^b$ respectively. In the case when a vertex of the dominant type has the same red and blue neighbors, the two ordered dominant types are the same.
 For deciding which vertices amongst those of dominant type are getting colored we also need a sequence of threshold values $q_1,q_2,\ldots,q_N\in[0,1]$ which is, for now, also a parameter which we discuss more in Section \ref{transition}. For introducing local randomness, draw a sequence $s(v)=(s_1(v),s_2(v),\ldots,s_{N+1}(v))$ of real numbers from $(0,1)$ for every vertex $v\in G$ independently, uniformly at random.

In principle, the sequences $D_1,D_2,\ldots,D_N$ and $q_1,q_2,\ldots,q_N$ are determined in a way that at each step, approximately $\eps\cdot \abs{V(G)}$ uncolored vertices get colored and the dominant type $D_t$ is a type of high priority having enough vertices to be colored.

At each step $1\leq t\leq N$, we color some uncolored vertices according to the following rule. If the dominant type is symmetric, i.e.\ $D_t^r=D_t^b,$ then we apply
$$
\begin{cases}
v\to red  \text{ if } v \text{ is of type } D_t \text{ and } s_t(v)\in [0,q_t]\\
v\to blue \text{ if } v \text{ is of type } D_t \text{ and } s_t(v)\in (q_t,2q_t].
\end{cases}
$$
Otherwise we apply
$$
\begin{cases}
v\to red  \text{ if } v \text{ is of type } D_t^r \text{ and } s_j(v)\in [0,q_t]\\
v\to blue \text{ if } v \text{ is of type } D_t^b \text{ and } s_j(v)\in [0,q_t].
\end{cases}
$$

After the $N$ coloring steps, if a vertex $v$ is still uncolored, then we color it red if $s_{N+1}(v)\in[0,1/2]$ and color it blue otherwise.

Finally, after having colored all the vertices, we do a recoloring for a subset of miscolored vertices, as described in Subsection \ref{sec:novelphase}. We take the vertex set of the largest induced balanced empty subgraph of the bipartite graph, which is induced by the set of red vertices which have more than $d/2$ blue neighbors, and the set of blue vertices which have more than $d/2$ red neighbors. Then we swap the colors of these vertices.

\subsection{Modelling the algorithm on the infinite regular tree}

In order to prove that our algorithm gives the desired output, we notice first that for constant $d$, a random $d$-regular graph have high girth with high probability. This enables us to model the algorithm on the $d$-regular infinite tree and later analyze the order of the error we made by switching the underlying structure.

In this section, we do calculations using the different colored rooted infinite $d$-regular tree $G=(V,E).$ We call its root $v_0,$ and the neighbors of $v_0$ by $v_1,\dots,v_d.$

Let us denote the set of colored infinite $d$-regular trees $$\mathcal T^c=\{V\to\{\text{"U","R","B"}\}\}$$ where "U", "R" and "B" correspond to the colors "uncolored", red and blue, respectively.

Similarly, we write $$\mathcal T^{pc}=\{V\to\{\text{"U","R","B","unspecified"}\} \mid \text{all but finitely many "unspecified"}\}$$ for the set of partially colored $d$-regular trees for which we only know the color of finitely many vertices. Here we need to emphasize that we regard "uncolored" to be a color beside red and blue. Naturally, an element of $\mathcal T^{pc}$ can be identified as a subset of $\mathcal T^c$ by letting the "unspecified" vertices being any of "U", "R" or "B". With this convention, we make the set $\mathcal T^c$ measurable by taking the $\sigma$-algebra generated by $\mathcal T^{pc}$ to be the set of measurable subsets.

Let $C_t:\Omega\to\mathcal T^c$ be the random variable describing the coloring of the infinite tree after the $t^{th}$ step of the first phase of the new algorithm. Let us denote the law of $C_t$ by $\mu_t$. The measure $\mu_t$ is invariant under the symmetries of the infinite $d$-regular tree, in particular, changing the root and reordering its neighbors. Moreover, the role of red and blue is also symmetric thus $\mu_t$ is also invariant under changing all the colors "R" to "B" and "B" to "R".

We are interested in the coloring of some subset $S\subset V$ of the vertices. This is a marginal of the random variable $C_t$ by forgetting the colors of the vertices of $V\setminus S$. We denote this marginal variable $C_{t}(S)$ and its law $\mu_{t}(S)$. As a special case, when we only describe the color of one vertex we omit the set notation and just use the vertex. The next lemma states that we do not need to specify how $S$ is embedded in the (rooted) infinite $d$-regular tree, but only need its isomorphism type.

\begin{lemma}\label{lemma:marginal} Let $G'=(V',E')$ be a connected finite graph and $\iota:G'\hookrightarrow G$ an embedding into the infinite $d$-regular tree. Then the measure $\mu_t(G')$ is well-defined and independent of $\iota.$
\end{lemma}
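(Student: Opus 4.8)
The plan is to prove a \emph{locality} property of the first phase of the algorithm and then combine it with the homogeneity of the infinite $d$-regular tree and the exchangeability of the i.i.d.\ seeds. First I would show, by induction on $t$, that the color $C_t(v)$ is a deterministic, Borel-measurable function of the isomorphism type of the rooted ball $N^t[v]$ together with the seeds $\{s(w):w\in N^t[v]\}$. The case $t=0$ is immediate since every vertex is ``U''. For the inductive step, recall that the dominant type $D_t$ and the threshold $q_t$ are fixed parameters of the algorithm: an already colored vertex keeps its color, and an uncolored $v$ is recolored at step $t$ according only to (i) its current type $(r,b)$, which is read off from the colors $C_{t-1}(v)$ and $C_{t-1}(w)$ for $w\in N(v)$, (ii) the comparison of $(r,b)$ with $D_t^r,D_t^b$, and (iii) the coordinate $s_t(v)$. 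By the inductive hypothesis each of $C_{t-1}(v)$ and $C_{t-1}(w)$ is a function of data sitting inside $N^{t-1}[v]\cup\bigcup_{w\in N(v)}N^{t-1}[w]\subseteq N^t[v]$, and $s_t(v)$ trivially is as well; since the rule consults the seeds only through finitely many comparisons with fixed thresholds, the resulting map is piecewise constant on a product of intervals, hence measurable. Applying this coordinatewise over the finite set $\iota(V')$, the tuple $C_t(\iota(V'))$ is a measurable function of the finitely many seeds on $N^t[\iota(V')]$ and of the isomorphism type of that marked ball. In particular $\mu_t(G')$ is the pushforward of a product of uniform measures under this map, hence a genuine probability measure on $\{\mathrm U,\mathrm R,\mathrm B\}^{V'}$ and thus well defined.

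To remove the dependence on $\iota$, I would use that $G$ is the infinite $d$-regular tree and that $\iota(G')$ is a connected, hence acyclic, subgraph. This forces the marked ball $\bigl(N^t[\iota(V')],\iota(V')\bigr)$ to have an isomorphism type independent of $\iota$: it is $G'$ with a depth-$t$ truncated $d$-regular tree attached at each vertex to fill the degree up to $d$, and since all these pendant trees are mutually isomorphic, any two embeddings $\iota_1,\iota_2$ admit a marked graph isomorphism $\psi\colon N^t[\iota_1(V')]\to N^t[\iota_2(V')]$ with $\psi\circ\iota_1=\iota_2$ (such a $\psi$ in fact extends to an automorphism of $G$, but only its restriction to the balls is needed). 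Because the seed family is i.i.d., its law is invariant under relabeling the vertex set by any bijection, in particular by one agreeing with $\psi$ on $N^t[\iota_1(V')]$. Hence the joint distribution of the isomorphism type of $N^t[\iota_1(V')]$ marked by $\iota_1(V')$ together with the restricted seeds coincides with the corresponding distribution along $\iota_2$. Feeding these into the deterministic map from the first paragraph, which depends only on isomorphism types, yields $\mu_t(\iota_1(V'))=\mu_t(\iota_2(V'))$, and the same argument shows that the measure depends only on the isomorphism type of $G'$.

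The main obstacle is the bookkeeping in the first step: one has to verify that the step-$t$ rule genuinely consults only distance-$1$ information (the neighbor colors defining the type) plus the purely local seed $s_t(v)$, so that $t$ rounds spread dependence by at most graph distance $t$; in particular one must check that ``already colored'' is itself a locally determined attribute and that $D_t$ and $q_t$ are fixed constants rather than instance-dependent quantities. The second step relies only on the routine tree-rigidity fact that the radius-$t$ neighborhood of a finite subtree of the infinite $d$-regular tree is determined up to marked isomorphism, together with the exchangeability of i.i.d.\ random variables, both of which are standard.
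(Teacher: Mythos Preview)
Your argument is correct, but the route differs from the paper's. The paper does not prove any locality bound; instead it takes as given (stated just before the lemma) that the law $\mu_t$ on the full tree is invariant under every automorphism of $G$, and then argues purely measure-theoretically: the embedding $\iota$ induces a map $\iota_*$ from colorings of $G'$ to cylinder sets in $\mathcal T^{pc}$, one checks that $\iota_*$ respects unions and intersections so that the pullback $\mu_t\circ\iota_*$ is a bona fide measure, and for a second embedding $\kappa$ one finds a global tree automorphism $\Sigma$ with $\kappa=\Sigma\circ\iota$, whence $\mu_t\circ\kappa_*=(\mu_t\circ\Sigma_*)\circ\iota_*=\mu_t\circ\iota_*$. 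Your approach instead earns the embedding-independence from scratch: you show by induction that $C_t(v)$ is determined by the seeded $t$-ball around $v$, so that the marginal on $\iota(V')$ is a pushforward of finitely many i.i.d.\ seeds through a rule that sees only the isomorphism type of the marked ball $N^t[\iota(V')]$, and you then use the tree-rigidity fact that this marked ball is the same for any embedding of a connected $G'$. The paper's proof is shorter because it offloads the work onto the symmetry of $\mu_t$; your proof is more self-contained and, as a bonus, the radius-$t$ locality you establish is exactly the ingredient the paper later invokes (without proof) in the error-estimation subsection when it says ``the color of $v$ after the $t^{th}$ step depends on the past of the $(t-1)$-neighborhood of $v$''.
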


\begin{proof}
	Similarly to the definitions of $\mathcal T^c$ we can define $\mathcal T^c_{G'}$ to be the set of colorings of $G'$ and $\mathcal T^{pc}_{G'}$ the set of partial colorings. We make the set $\mathcal T^c_{G'}$ measurable by taking the $\sigma$-algebra generated by $\mathcal T^{pc}_{G'}.$

	The embedding $\iota$ induces the map $\iota_*:\mathcal T^{pc}_{G'}\hookrightarrow\mathcal T^{pc}$ by setting the color of the vertices in $V\setminus\iota(V')$ to be "unspecified". With an abuse of notation, we extend $\iota_*$ to the whole $\sigma$-algebra generated by the domain, which is, because of the finiteness of $G'$, the power set of $\mathcal T^c_{G'}.$ Any element $T$ of the power set can uniquely be written as a finite union of some basis elements $T_i\in\mathcal T^c_{G'}$: $T=\bigcup_{i}\{T_i\}$. Then we define $\iota_*(T)=\bigcup_i\{\iota_*(T_i)\}$. Now, $\iota_*$ clearly commutes with the union operation.

	For the consistency of the intersections, it is enough to check the pairwise intersections of basis elements. Let $T_1,T_2\in\mathcal{T}^c_{G'}$ be two colorings. We have to check if $\iota_*(\{T_1\}\cap\{T_2\})=\iota_*(\{T_1\})\cap\iota_*(\{T_2\})$ holds. When $T_1=T_2$ it is clearly true. If the two colorings are not the same, then the LHS is the empty set by definition. The RHS has the meaning that "the vertices $\iota(V')$ are colored according to $\iota_*(T_1)$ and are colored according to $\iota_*(T_2)$". Because $T_1\neq T_2$, this is also the empty set. We showed that $\iota_*$ commutes with the union and the intersection, thus all subsets of $\mathcal T^c_{G'}$ have a well-defined measurable image under $\iota_*$. This makes the pullback measure $\mu_t(G')=\mu_t\circ\iota_*$ well defined on $G'$.

 Now, let $\kappa:G'\hookrightarrow G$ be another embedding. There is a symmetry $\Sigma$ of the $d$-regular infinite tree $G$ for which $\kappa = \Sigma\circ\iota$ holds. Then $\mu_t\circ\kappa_*=\mu_t\circ{(\Sigma\circ\iota)_*}=\mu_t\circ(\Sigma_*\circ\iota_*)=(\mu_t\circ\Sigma_*)\circ\iota_*=\mu_t\circ\iota_*$ where $\Sigma_*$ is the map induced by $\Sigma$ on the generated $\sigma$-algebra of $\mathcal T^{pc}$ to itself. The second last equality makes sense because the inverse of $\Sigma$ is also measurable and the last one is true because $\mu_t$ is invariant under the symmetries of $G$. This shows that the pullback measure $\mu_t\circ\iota_*$ is independent of the choice of the embedding $\iota$, which concludes the proof.
\end{proof}

\begin{remark}\label{remark:marginal}
 In the proof of the previous lemma, we only used the connectivity of $G'$ for the existence of the symmetry $\Sigma$. Thus, when $G'$ is defined as a not necessarily connected (but finite) subgraph of the $d$-regular infinite tree $G$ with an embedding which is fixed up to the symmetries of the tree, the measure $\mu_t(G')$ is still well defined.
\end{remark}

To be able to formulate the evolution of the distributions over time, we need to have some kind of independence of the faraway parts of the graph. The next lemma clarifies the conditions for the independence.

\begin{lemma}
\label{lemma:condindep}
 Let $w_1$ and $w_2$ be two adjacent vertices of the $d$-regular infinite tree. If we remove the edge $w_1w_2$ the graph falls into two connected components $S_1$ and $S_2$. Let $A_1\subset S_1$ and $A_2\subset S_2$ be any finite set of vertices. Then for any step $t$, the colorings $C_t(A_1)$ and $C_t(A_2)$ are independent conditioned on the event $(C_{t-1}(w_1)="U")\cap(C_{t-1}(w_2)="U"),$ meaning that if there is an edge with both of its endpoint uncolored then the two sides of the edge are independent.
\end{lemma}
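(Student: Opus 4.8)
\emph{Proof proposal.}
The plan is to prove the lemma by induction on $t$, by exhibiting an explicit ``decoupling'' of the process across the edge $w_1w_2$ on the conditioning event. Two structural features of the colouring procedure make this possible: first, once a vertex is coloured it stays coloured, so the event $C_{t-1}(w_2)="U"$ forces $w_2$ to be uncoloured at \emph{every} step $\le t-1$ (and symmetrically for $w_1$); second, in the infinite $d$-regular tree the edge $w_1w_2$ is the only path between $S_1$ and $S_2$, so the update rule applied at a vertex of $S_1$ ever consults information about $S_2$ only through the current colour of $w_2$. As a preliminary I would record the elementary fact that, since the dominant types $D_t$ and the thresholds $q_t$ are fixed parameters rather than quantities read off from the global configuration, the new colour $C_t(v)$ is a deterministic function of the previous colours $C_{t-1}(N[v])$ of the closed neighbourhood of $v$ together with the single fresh seed $s_t(v)$.

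Next I would introduce two ``decoupled'' auxiliary processes: let $\tilde C$ be the colouring obtained by running the very same algorithm on $S_1$ only, treating $w_2$ as a phantom vertex that carries no seed and is permanently uncoloured; and let $\hat C$ be the analogous process on $S_2$ with $w_1$ made into a phantom. By construction $\tilde C_j(v)$ is measurable with respect to the seeds $\{s_i(u):u\in S_1,\ i\le j\}$, and $\hat C_j(v)$ with respect to $\{s_i(u):u\in S_2,\ i\le j\}$; since $S_1$ and $S_2$ are disjoint and all seeds are independent, the $\sigma$-algebras $\mathcal S_1=\sigma(\{s_i(u):u\in S_1\})$ and $\mathcal S_2=\sigma(\{s_i(u):u\in S_2\})$ are independent. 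The heart of the argument is the claim that on the event $E_{t-1}:=\{C_{t-1}(w_1)="U"\}\cap\{C_{t-1}(w_2)="U"\}$ one has $C_j(v)=\tilde C_j(v)$ for all $v\in S_1$ and all $j\le t$, and symmetrically $C_j(v)=\hat C_j(v)$ for all $v\in S_2$. This I would prove by induction on $j$; the only interesting case is $v=w_1$, where the real update rule needs the colour $C_{j-1}(w_2)$, and on $E_{t-1}$ (together with monotonicity) this colour is $"U"$ for every $j-1\le t-1$, which is exactly what the phantom makes $\tilde C$ see.

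The second ingredient is that the conditioning event itself factorizes across the edge: I would show $E_{t-1}=G_1\cap G_2$, where $G_1:=\{\tilde C_{t-1}(w_1)="U"\}\in\mathcal S_1$ and $G_2:=\{\hat C_{t-1}(w_2)="U"\}\in\mathcal S_2$. The inclusion $E_{t-1}\subseteq G_1\cap G_2$ is immediate from the decoupling claim above. The reverse inclusion goes again by induction on $j\le t-1$, simultaneously establishing that on $G_1\cap G_2$ both phantoms stay uncoloured in the \emph{real} process and that the real and decoupled colourings agree on each side; here monotonicity of the decoupled processes — if $\tilde C_{t-1}(w_1)="U"$ then $\tilde C_j(w_1)="U"$ for all $j\le t-1$ — is what keeps the induction closed.

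With these two facts the conclusion is a short computation. On $E_{t-1}$ we have $C_t(A_1)=\tilde C_t(A_1)=:X_1$, an $\mathcal S_1$-measurable random variable, and $C_t(A_2)=\hat C_t(A_2)=:X_2$, an $\mathcal S_2$-measurable one, while $E_{t-1}=G_1\cap G_2$ with $G_1\in\mathcal S_1$, $G_2\in\mathcal S_2$; hence for any target colour-sets $U_1,U_2$ the events $\{X_1\in U_1\}\cap G_1$ and $\{X_2\in U_2\}\cap G_2$ lie in the independent $\sigma$-algebras $\mathcal S_1,\mathcal S_2$, so $\bbP(\{C_t(A_1)\in U_1\}\cap\{C_t(A_2)\in U_2\}\cap E_{t-1})$ factors as the product $\bbP(\{X_1\in U_1\}\cap G_1)\,\bbP(\{X_2\in U_2\}\cap G_2)$, and dividing through by $\bbP(E_{t-1})=\bbP(G_1)\bbP(G_2)$ yields exactly the asserted conditional independence of $C_t(A_1)$ and $C_t(A_2)$. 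I expect the main obstacle to be the careful bookkeeping in the reverse inclusion $G_1\cap G_2\subseteq E_{t-1}$, and, more generally, being scrupulous about the exact range of steps for which the decoupling holds — it is valid up to step $t$ precisely because $E_{t-1}$ pins down the colour of $w_2$ only up to step $t-1$, while the step-$t$ update consults step-$(t-1)$ colours — but everything else is routine once the phantom-process formalism is in place.
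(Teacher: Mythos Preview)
Your argument is correct and takes a genuinely different route from the paper's own proof. The paper proceeds by a direct induction on the step index $s$: it defines backward-growing sets $A_i^s=N[A_i^{s+1}]\setminus\{w_{3-i}\}$ with $A_i^t=A_i$, and shows by induction that $C_s(A_1^s)$ and $C_s(A_2^s)$ are conditionally independent given $B_t$, expanding the joint conditional probability over all possible colourings of $A_1^s,A_2^s$ and using that the one-step update is local. Your proof is a pathwise coupling argument: you introduce the two phantom processes $\tilde C,\hat C$ that live on disjoint seed families, show that on $E_{t-1}$ the real process coincides with them on each side, and --- crucially --- that the conditioning event itself factorises as $E_{t-1}=G_1\cap G_2$ with $G_i\in\mathcal S_i$. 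Your approach gives a slightly stronger and more structural conclusion (on $E_{t-1}$ the coloring of $S_i$ is literally $\mathcal S_i$-measurable, not merely conditionally independent in law), at the cost of having to verify the reverse inclusion $G_1\cap G_2\subseteq E_{t-1}$, which you handle correctly via simultaneous induction and the monotonicity of the phantom processes. The paper's route avoids auxiliary processes and is entirely distributional, but the inductive sum over colourings is somewhat heavier bookkeeping than your final one-line factorisation.
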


\begin{proof}
 With $i\in\{1,2\}$ and $0\leq s\leq t$ integer, define the set $A_i^s$ recursively as follows: $A_i^t=A_i$ and $A_i^s=N[A_i^{s+1}]\setminus\{w_{2-i}\}$ if $s<t$, i.e. the closed neighborhood of $A^{s+1}_i$ apart from the vertex $w_{2-i}$.

We prove the following stronger statement: the colorings $C_s(A_1^s)$ and $C_s(A_2^s)$ are independent conditioned on the event $(C_{t-1}(w_1)="U")\cap(C_{t-1}(w_2)="U").$
    
 It is true for $s=0$: in the beginning every vertex is uncolored which means that $C_0$ is constant. So $C_0(A_1^0)$ and $C_0(A_2^0)$ are also constant random variables, thus they are independent even when conditioned on any event.
    
 Now suppose that the statement is true for some $s<t$. We want the conditional independence of $C_{s+1}(A_1^{s+1})$ and $C_{s+1}(A_2^{s+1})$. With $i\in\{1,2\}$, let $T_i^{s+1}:A_i^{s+1}\to\{"U","R","B"\}$ be an arbitrary coloring on $A_i^{s+1}$. During the following calculation we use the following short hand notation for the event we condition on: $B_t=(C_{t-1}(w_1)="U")\cap(C_{t-1}(w_2)="U").$ Note that $B_t$ also implies $B_1,\dots,B_{t-1}.$ Now we can prove the conditional independence.
\begin{equation*}
\begin{split}
 \bbP\left(\bigcap_{i\in\{1,2\}}\{C_{s+1}(A_i^{s+1})=T_i^{s+1}\}\middle|B_t\right)&=\sum_{\substack{R^s_1\in\mathcal T^c_{A_1^s},\\R^s_2\in\mathcal T^c_{A_2^s}}}\bbP\left(\bigcap_{i\in\{1,2\}}\{C_{s+1}(A_i^{s+1})=T_i^{s+1},C_s(A_i^s)=R_i^s\}\middle| B_t\right)\\
 &=\sum_{\substack{R^s_1\in\mathcal T^c_{A_1^s},\\R^s_2\in\mathcal T^c_{A_2^s}}}\prod_{i\in\{1,2\}} \bbP\left(C_{s+1}(A_i^{s+1})=T_i^{s+1},C_s(A_i^s)=R_i^s\middle| B_t\right)\\
 &=\prod_{i\in\{1,2\}} \bbP\left(C_{s+1}(A_i^{s+1})=T_i^{s+1}\middle| B_t\right).\\
\end{split}
\end{equation*}

The first and the last equality is true because the sums run through disjoint events. For the equation in the middle, we need to understand what happens to a fixed vertex at one step. Based on the colors of its neighbors it either stays the same or using some randomness it changes color. The random choice of the vertices are independent of each other. So the behavior of one vertex at one step only depends on the state of its one-neighborhood and some randomness of its own. By fixing the coloring after the step $s$ as $C_s(A_i^s)=R_i^s$, we only need to check if the one-neighbors overlap or not. Apart from $w_1$ and $w_2$ any vertex $v_i\in A_i$ has $N[v_i]\subset S_i$. But $C_{s}(w_1)=C_{s}(w_2)="U"$ is forced by the condition $B_t$. So the colorings $C_{s+1}(A_1^{s+1})$ and $C_{s+1}(A_2^{s+1})$ are independent under the condition $B_t$ when we know the colorings $C_s(A_i^s)$.
\end{proof}

At the end of the first (greedy) phase of the algorithm, we need the distribution of the coloring of the one-neighborhood of the root of the $d$-regular infinite tree to determine whether it would be better to flip the color of the root or not. Due to Lemma \ref{lemma:condindep} we do not need to follow the coloring of very large neighborhoods. To calculate this distribution, it is enough to follow a marginal of the coloring of the two-neighborhood of the root: when a neighbor $w$ of the root gets colored, we forget the information (red, blue or uncolored) on the neighbors of $w$ -- of course, beside the root. So for uncolored neighbors of the root we take note of the colors of its outer neighbors and for colored first neighbors we just note their colors. Moreover, because of the law of the coloring of the infinite regular tree is invariant under the isomorphisms of the graph, it is indeed enough to control the second neighborhood of the root.

After these preliminaries, we can calculate the transition probabilities. To be able to use our previous notation we denote the $2$-depth $d$-regular tree $T_{d,2}$ -- the index $d$ is omitted when clear from context. Let $A\in\mathcal T^{pc}_{T_2}$ be a partial colored $2$-depth $d$-regular tree. We want to calculate $\mu_t(A)$: the probability of $A$ after having $t$ coloring rounds. Because of our previous observations, we are only interested in the partial colorings where the root and its neighbors are either red, blue, or uncolored: they cannot be unspecified and a second neighbor of the root is unspecified if and only if the first neighbor connecting it to the root is not uncolored: either red or blue.

Our algorithm for coloring the vertices of the infinite $d$-regular tree defines an inhomogeneous Markov process with a distribution $\mu_t$ at time $t$.  If we manage to give the transition probabilities of the stochastic process at each step, then we can recursively calculate the law $\mu_t$ at time $t.$ We focus on the marginals described before.
\subsection{Transition probabilities}\label{transition}
For determining the transition probabilities, at each step $t$ we need a global quantity the dominant vertex type $D_t=\{r_t^d,b_t^d\}$ which we define recursively using the following formula:
$$D_t=\max\left\{D\text{ type}\mid \mu_{t-1}(D)\geq\eps\right\},$$ where the maximum is taken with respect to the priority ordering introduced in Subsection \ref{subsec:greedy} and $\mu_{t-1}(D)$ is the total measure of colorings where the root is of type $D$ after step $t-1.$
When the set $\left\{D\text{ type}\mid \mu_{t-1}(D)\geq\eps\right\}$ is empty and $D_t$ cannot be defined, then the first (greedy) phase ends. We recall that the number of steps taken is denoted by $N$.
In most cases, it is easier to handle the types as unordered pairs, but when considering a particular vertex, it might be useful to know if it has more red or blue neighbors. In this "ordered" case, we write $D_t^r$ and $D_t^b$, which means that the given vertex is of type $D_t$ and has at least as many red neighbors as blue and at least as many blue neighbors as red, respectively. Note that when $r_t^d=b_t^d$ holds, then $D_t^r$ and $D_t^b$ are the same. For detecting this phenomenon, we define the \emph{multiplicity} of the dominant vertex type as
$$m(D_t)=\begin{cases}1
&\quad\text{if } D_t^r=D_t^b,\\
    2&\quad\text{if } D_t^r\neq D_t^b.\\
\end{cases}$$
We also must emphasize that $D_t$ is a deterministic and global quantity because it is calculated using all the colors of all the vertices of the infinite tree and it depends on the law $\mu_{t-1}$, not on the random variable $C_{t-1}$. If we run the algorithm in a local manner, the law $\mu_{t-1}$ is not available to the nodes during runtime, thus the dominant type cannot be determined. This is why we gave the sequence of dominant types as a parameter of our algorithm: to make it indeed local.

Let $A$ and $B$ two partial colored $2$-depth $d$-regular trees, i.e., elements of $\mathcal T^{pc}_{T_2}$. Then the transition probability at step $t$ from $A$ to $B$ (with dominant type $D_t$) is $\bbP(C_t\in B\mid C_{t-1}\in A)$ where we use the natural identification of partial colored $2$-depth $d$-regular trees with partial colored infinite $d$-regular trees by taking the color of the extra vertices to be unspecified as described in the proof of Lemma \ref{lemma:marginal}. The conditional probability splits due to the fact that the choices at the individual vertices are happening independently: either deterministically or with an independent random choice. Thus, we can write
\begin{equation}
 \label{eq:transition_probability}
 \bbP(
C_t\in B\mid C_{t-1}\in A)=\prod_{w\in V(T_2)}\bbP(C_t(w)\in B(w)\mid C_{t-1}\in A),
\end{equation}
where $C_t(w)\in B(w)$ is true if $B(w)$ is "unspecified" or $C_t(w)=B(w)$. As the symmetries of the $2$-depth $d$-regular tree suggest, we need to consider the terms on the RHS grouped by how far the vertex $w$ is from the root of the tree.

For calculating these transition probabilities, first, we need to handle two more simple cases. Let $q_t$ be the probability that the root of the infinite $d$-regular tree is colored red after the $t^{th}$ step, conditioned on that before the $t^{th}$ step it was uncolored and it was of the dominant type $D_t^r$ having at least as many red neighbors as blue. Then we have 
$$q_t=\frac{m(D_t)\cdot\eps/2}{\mu_{t-1}(D_t)}.$$
Indeed, at a step we want to color vertices of total measure $\eps.$ When the dominant type is symmetric, i.e. $m(D_t)=1,$ the formula reduces to $\frac{\eps/2}{\mu_{t-1}(D_t)},$ which is clearly the desired probability. When $m(D_t)=2,$ we have $\frac{\eps}{\mu_{t-1}(D_t)}=\frac{\eps/2}{\mu_{t-1}(D_t^r)}$ which gives again the correct probability. Because of the symmetry of the colors red and blue in our algorithm, $q_t$ also gives the probability that the root is blue after the $t^{th}$ step, conditioned that after the $t^{th}$ step it was uncolored and of the dominant type with at least as many blue neighbors as red. The sequence of these conditional probabilities is the sequence of threshold values in Subsection \ref{subsec:newalg}, the other parameter of the local algorithm.

For the second simple case, let us fix a neighbor $v_1$ of the root $v_0$ of the infinite $d$-regular tree. Then, let $\hat q_t$ be the probability that $v_0$, the root of the infinite $d$-regular tree, is red after the $t^{th}$ step, conditioned on that before the $t^{th}$ step neither the root $v_0$, nor its previously fixed neighbor $v_1$ were colored. Note that here we do not use the dominant type in the condition: in the condition, we do not know anything about the color of the other neighbors of $v_0$. Also, the condition is different from "both the root and any of its neighbors are uncolored": we fix the uncolored neighbor. We can calculate this conditional probability as
$$\hat q_t=\frac{(d-r_t^d-b_t^d)/d\cdot\eps/2}{\mu_{t-1}(\{v_0\mapsto\text{"U"}\}\cap\{v_1\mapsto\text{"U"}\})}.$$
As before, $r_t^d$ and $b_t^d$ is the number of a vertex of the dominant type, respectively. $\{v_0\mapsto\text{"U"}\}$ is the set of partial colorings of the infinite $d$-regular tree for which the root $v_0$ is uncolored. The other set is defined analogously. One can check the formula by writing out the definition of conditional probability. The denominator is just the probability of the condition, and in the numerator, we have the probability that the root is getting colored red (this gives $\eps/2$) at step $t$ when its prefixed neighbor $v_1$ was uncolored after step $t-1$ (this gives the normalizing factor). And again, by the symmetry of the colors red and blue, $\hat q_t$ is also the probability that the root is blue after the $t^{th}$ step, given the same conditions.

Once we obtain the measures $\mu_t$ of all labelled $2$-depth $d$-regular trees, we can determine the measure of the red-blue cut in the $d$-regular tree, taking into account the second phase of the algorithm (the recoloring) as well, via the observation below.

\begin{observation}
The density of the red-blue cut after the first (greedy) phase of the algorithm on the $d$-regular tree is $$\mu(\{v_0\mapsto "R"\}\cap\{v_1\mapsto "B"\}),$$ where $\mu$ is the law of the coloring of the infinite tree after finishing the first phase and $v_0$ is the root of the infinite tree and $v_1$ is one of its fixed neighbors (analogously to the definition of $\hat q_t$).
The measure of the miscolored vertices (c.f. the second phase) is $$2\cdot\mu(\{v_0\mapsto "R"\}\cap\{N(v_0)\text{ has more vertices of color blue than red}\}).$$
\end{observation}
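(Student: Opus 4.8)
The plan is to derive both identities purely from the two invariance properties of $\mu:=\mu_N$ already recorded above — invariance under the automorphism group of the $d$-regular tree (which acts transitively on vertices and on directed edges) and invariance under the global colour swap $\mathrm{R}\leftrightarrow\mathrm{B}$ — together with the remark that the two events involved are cylinder events, so that their $\mu$-measures are the well-defined $2$-depth marginals the algorithm already tracks (Lemma~\ref{lemma:marginal} and Remark~\ref{remark:marginal}). So the first thing I would do is fix a base vertex $v_0$ and a base neighbour $v_1$ and check measurability: ``the base edge is red--blue'' depends only on $C_N(\{v_0,v_1\})$ and ``$v_0$ is miscoloured'' depends only on $C_N(N[v_0])$; each is a finite union of partial colourings, hence lies in the $\sigma$-algebra generated by $\mathcal T^{pc}$, and by Lemma~\ref{lemma:marginal}/Remark~\ref{remark:marginal} its measure is independent of the chosen embedding. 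On a tree-automorphism invariant random colouring the ``density'' of a local pattern is by definition its probability at a fixed base vertex (or base directed edge), so once measurability is in place the two formulas are read off directly.

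For the cut, by dart-transitivity the density of the red--blue cut — which here means the $\mu$-probability attached to the base directed edge $(v_0,v_1)$, equivalently the number of cut edges divided by the total number $dn$ of directed edges — is exactly the probability of the configuration ``$v_0$ red and $v_1$ blue'', namely $\mu(\{v_0\mapsto\text{"R"}\}\cap\{v_1\mapsto\text{"B"}\})$. (The probability that a given unoriented edge is cut, i.e.\ that its endpoints receive the two colours in some order, is then twice this, by the colour-swap symmetry of $\mu$.) For the miscoloured vertices, I would first recall that the recolouring phase of Subsection~\ref{subsec:newalg} calls a vertex miscoloured exactly when it is red with more than $d/2$ blue neighbours, or blue with more than $d/2$ red neighbours — for odd $d$ precisely ``red with a blue majority in $N(v_0)$'' or ``blue with a red majority in $N(v_0)$''. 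By vertex-transitivity the density of miscoloured vertices equals $\mu(\{v_0\mapsto\text{"R"}\}\cap\{N(v_0)\ \text{has more blue than red}\})+\mu(\{v_0\mapsto\text{"B"}\}\cap\{N(v_0)\ \text{has more red than blue}\})$, and applying the colour-swap invariance of $\mu$ to the second summand turns it into the first, producing the factor $2$ in the claimed formula.

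Within this Observation the only point requiring real care is what to do with vertices still uncoloured when the first phase stops: their total $\mu$-measure need not be $0$ (the phase stops only once no type has measure $\ge\eps$, so the residual uncoloured mass is $O(\eps)$, spread over the finitely many types with $r+b<d$), so strictly speaking ``$N(v_0)$ has more blue than red'' must be read after the deterministic cleanup step that colours the leftovers, or else the $O(\eps)$ discrepancy has to be tracked and absorbed into the error term; I would flag this explicitly and otherwise treat it as part of the accounting already needed for the main analysis. I do not expect any genuine obstacle in the Observation itself — it is essentially an unwinding of the definitions of the cut and of ``miscoloured'' followed by two applications of the symmetries of $\mu$. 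The hard part of the overall argument lies elsewhere — in the recursive evaluation of the transition probabilities of Subsection~\ref{transition} and in bounding the error incurred by computing on the infinite tree rather than on the random $d$-regular graph — not in this bookkeeping step.
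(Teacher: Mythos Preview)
Your proposal is correct. The paper states this as an Observation without proof, treating both formulas as self-evident consequences of the symmetries of $\mu$; your write-up supplies exactly the argument one would give if asked to justify it — cylinder-event measurability via Lemma~\ref{lemma:marginal}/Remark~\ref{remark:marginal}, vertex- and dart-transitivity of the tree automorphism group for the ``density equals probability at the base'' identification, and the global $\mathrm{R}\leftrightarrow\mathrm{B}$ swap to collapse the two colour cases into a single term with a factor~$2$. Your clarification about the residual uncoloured mass is a useful caveat: in the paper's phrasing the first phase includes the cleanup step (colour the leftovers via $s_{N+1}$), so $\mu$ already has no ``U'' mass and your $O(\eps)$ bookkeeping alternative is not strictly needed, but flagging it does no harm.
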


Now, we give the formulas for the vertex-wise conditional probabilities from Equation \eqref{eq:transition_probability} as follows. We have two cases depending on how far the vertex $w$ is from the root of the tree.

\begin{enumerate}
\item case: $w$ is the root or a neighbor of the root.

\begin{tabular}{|c|c|c|c|}
 \hline
 conditions on $A$ \textbackslash value of $B(w)$& "R" & "B" & "U"\\
 \hline
 $A(w)=$"R" & $1$ & $0$ & $0$ \\
 $A(w)=$"B" & $0$ & $1$ & $0$ \\
 $A(w)=$"U" and $w$ is not of $D_t$ & $0$ & $0$ & $1$ \\
 $A(w)=$"U" and $w$ is of $D_t^r$ & $q_t$ & $(2-m(D_t))\cdot q_t$ & $1-(3-m(D_t))\cdot q_t$ \\
 $A(w)=$"U" and $w$ is of $D_t^b$ & $(2-m(D_t))\cdot q_t$ & $q_t$ & $1-(3-m(D_t))\cdot q_t$ \\
 \hline
\end{tabular}

Note that when $r_t^d=b_t^d$, then the last two rows are the same.

\item case: $w\neq v_0$ is a neighbor of a neighbor $v$ of the root.

\begin{tabular}{|c|c|c|c|}
 \hline
 conditions on $A$ \textbackslash value of $B(w)$& "R" & "B" & "U"\\
 \hline
 $A(w)=$"R" & $1$ & $0$ & $0$ \\
 $A(w)=$"B" & $0$ & $1$ & $0$ \\
 $A(w)=$"U" and $A(v)=$"U" & $\hat q_t$ & $\hat q_t$ & $1-2\hat q_t$\\

 \hline
\end{tabular}

Indeed, after a neighbor of the root is colored, we do not continue to follow the color of its other neighbors, the case when $A(v)$ is not "U" is not one to consider.
\end{enumerate}
Combining these and \eqref{eq:transition_probability} together, we have the transition probabilities $\bbP(C_t\in B\mid C_{t-1}\in A),$ so we can recursively calculate the laws $\mu_t.$ 

\subsection{Numerical results}
\begin{table}[h!!]
	\centering
	\begin{tabular} {|c||c|c|c|}
  \hline
  $1/\eps$ & \thead{cut size without\\improvement} & \thead{miscolored vertices} & \thead{improved\\cut size} \\
  
  \hline
  $10^3$ & $0.501778$ & $0.0199445$ & $0.501257$ \\
  
  \hline
  $10^4$ & $0.503125$ & $0.0190561$ & $0.497012$ \\
  
  \hline
  $5\cdot 10^4$ & $0.502832$ & $0.0187139$ & $0.496488$ \\
  
  \hline   \hline
  $5\cdot 10^4(^*)$ & $0.502803$ & $0.018679$ & $0.496488$ \\
  \hline
  $2.5\cdot 10^6(^*)$ & $0.502743$ & $0.0186205$ & $0.496392$ \\
  \hline
	\end{tabular}
	\caption{Our numerical results for the $5$-regular case. The last two rows are calculated using simplified formulas, see Remark \ref{simplified}.}
    \label{tab:results}
\end{table}
We made a computer program to calculate $\mu_t$ recursively\footnote{The source is available here: \url{https://github.com/szl21/starCounting}}. Our numerical results are in Table \ref{tab:results} for the $5$-regular case. 

Accordingly, we have that for $1/\eps=5\cdot 10^4$ the improved cut size is at most $0.496488$ which concludes the proof of Theorem \ref{bisect}, provided that we show that the cut-size provided by the process on random $d$-regular $n$-vertex graphs approximates the respective parameter given by the process on the infinite $d$-regular tree with high probability. This argument will be carried out in the next subsection. Notice that this result gives back the numerical bound of Díaz, Serna and Wormald 
 \cite{diaz2007bounds} for the cut size after the first phase. With the second phase we get better numerical values.

\begin{remark}\label{simplified}
    The last two rows of Table \ref{tab:results} are special in the following sense. As the form of Equation \eqref{eq:transition_probability} suggests, the transition probabilities are polynomial in the values $q_t$ and $\hat q_t,$ which are in turn a rational function of the law $\mu_{t-1},$ thus the calculation is far from just applying some linear operator multiple times. At each step we choose every vertex independently if they get colored or not. When the graph is huge, then the probability of two vertices get colored at a particular step inside a particular $2$-neighborhood is small. However, if we omit the higher order terms based on this intuition, we could not prove that the accumulation of the kind of error is controllable. The last two rows are calculated using the simplified formulas. As one can see in the table, numerical evidence suggests that the total error should remain small. If this is indeed true, then using the same amount of computational power we would be able to give better bounds. In fact, the time to compute the third and last rows of Table \ref{tab:results} were approximately the same.
\end{remark}

\subsection{Error estimation}
Now we discuss how our analysis on the infinite tree could differ from the process on a realization of a random graph. The problem is that near a short cycle the graph is not locally treelike.

For a given vertex $v$ and a time $t$, the color of $v$ after the $t^{th}$ step depends on the past of the $(t-1)$-neighborhood of  $v$. Indeed, the first step only depends on the first random label on $v$, all of its neighbors start as uncolored. Later, at step $t$, we know that after step $t-1$ the color of $v$ and all of its neighbors depended on their $t-2$-neighborhood and the $t^{th}$ step of $v$ depends on the current state of its one-neighborhood, which in turn depends on the past of its $t-1$-neighborhood. We take at most $1/\eps$ steps, so we need that the $(1/\eps+1)$-neighborhood of $v$ does not contain a cycle. Using a similar argument, for following the $2$-neighborhood of the vertex $v$, we need that its $(1/\eps+3)$ neighborhood does not contain a cycle. We denote this radius by $R=1/\eps+3.$

We find the vertices which does not satisfy the previous condition the other way around. Let us take a cycle of length $k.$ It is contained in the $R$-neighborhood of at most $$k\cdot\left(d-2+(d-2)\cdot(d-1)+\dots+(d-2)\cdot(d-1)^{R-1-\lfloor\frac{k+1}{2}\rfloor}\right)<k\cdot (d-1)^{R-\lfloor\frac{k+1}{2}\rfloor}$$ vertices. For the number of cycles of length $k,$ we use the following well-known statement.
\begin{lemma}[Bollobás \cite{bollobas1980probabilistic}, Wormald \cite{wormald1999models}]
\label{lemma:BW_poisson}
For every $k\ge 1$ and $d \ge 3$, the number of cycles of length $k$ in
a random $d$-regular graph converges in distribution to a Poisson random variable with mean equal to $\lambda_k=\frac{(d-1)^k}{2k}$.
\end{lemma}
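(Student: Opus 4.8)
\emph{Proof sketch.}
The plan is to run the classical method of moments in the configuration (pairing) model and then transfer the conclusion to the uniform random simple $d$-regular graph. Work in the configuration model: attach $d$ half-edges to each of the $n$ vertices and take a uniformly random perfect matching of the $dn$ half-edges (assuming $dn$ even), producing a random $d$-regular multigraph $G^{*}$; conditioning on $G^{*}$ being simple yields a uniformly random simple $d$-regular graph. Fix $m$ and let $X_1,\dots,X_m$ with $X_k=X_k(G^{*})$ count the cycles of length $k$ in $G^{*}$ (so $k=1$ counts loops and $k=2$ counts pairs of parallel edges). The first step is to prove joint convergence in distribution $(X_1,\dots,X_m)\Rightarrow(Z_1,\dots,Z_m)$, where the $Z_k$ are independent Poisson variables with $\bbE Z_k=\lambda_k=(d-1)^k/(2k)$. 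Since a vector of independent Poissons is determined by its moments, it suffices to show that for all fixed nonnegative integers $r_1,\dots,r_m$,
\[
\bbE\!\left[\prod_{k=1}^{m}(X_k)_{r_k}\right]\longrightarrow\prod_{k=1}^{m}\lambda_k^{r_k}\qquad(n\to\infty),
\]
where $(x)_r=x(x-1)\cdots(x-r+1)$ is the falling factorial.

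Next I would compute these joint factorial moments by a direct enumeration. The quantity $\bbE[\prod_k (X_k)_{r_k}]$ is the expected number of ordered tuples that contain, for each $k$, a sequence of $r_k$ distinct $k$-cycles, all the cycles being pairwise distinct; I would expand it as a first-moment-type sum over all ways of prescribing such a family of would-be cycles — by choosing their vertices and the half-edges forming their edges — times the probability that the random matching realizes all the prescribed half-edge pairs. For one $k$-cycle, one picks an ordered sequence of $k$ distinct vertices (there are $(n)_k\sim n^k$ of them), quotients by the $2k$ rotations and reflections, and at each vertex selects an ordered pair of distinct half-edges pointing to the two cycle-neighbours, for $(d(d-1))^k$ choices; the probability that the resulting $k$ disjoint half-edge pairs are all matched as prescribed is $\prod_{i=0}^{k-1}(dn-1-2i)^{-1}\sim(dn)^{-k}$. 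Multiplying gives $\bbE[X_k]\sim\frac{n^k(d(d-1))^k}{2k}(dn)^{-k}=\frac{(d-1)^k}{2k}=\lambda_k$, and the same computation covers $k=1,2$ as well. For a tuple of $\sum_k r_k$ cycles the calculation is performed once per cycle and the leading term is the product of the individual contributions: configurations in which two prescribed cycles share a vertex or a half-edge are down by a factor $\Theta(1/n)$ and contribute $o(1)$, and the mild dependence among the matching events of disjoint pairs changes each factor only by $1+O(1/n)$. Hence the joint factorial moments converge to $\prod_k\lambda_k^{r_k}$, yielding the joint Poisson limit. Getting the symmetry factors right and checking that these overlap and dependence corrections are genuinely negligible is the one technical step, though it is completely routine.

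Finally I would transfer the statement to the uniform simple model. The multigraph $G^{*}$ is simple exactly when $X_1=X_2=0$, so the joint convergence above gives $\bbP(G^{*}\text{ simple})\to\bbP(Z_1=Z_2=0)=e^{-\lambda_1-\lambda_2}=e^{-(d-1)/2-(d-1)^2/4}>0$. Because $Z_1,\dots,Z_m$ are independent in the limit, conditioning on $\{Z_1=Z_2=0\}$ leaves $(Z_3,\dots,Z_m)$ a vector of independent Poissons with means $\lambda_3,\dots,\lambda_m$. Since laws of integer-valued vectors converge iff their probability mass functions converge pointwise, and one may divide by $\bbP(X_1=X_2=0)$, which tends to a positive limit, the conditional law of $(X_3,\dots,X_m)$ given $\{X_1=X_2=0\}$ — i.e.\ the joint law of the short-cycle counts in the uniform random simple $d$-regular graph — converges to independent Poissons with means $\lambda_k=(d-1)^k/(2k)$. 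Reading off any single coordinate gives the assertion. The main obstacle, as noted, is purely the bookkeeping in the second step; no idea beyond the method of moments is required, which is why the result can safely be quoted.
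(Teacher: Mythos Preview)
Your sketch is correct and is precisely the standard argument (method of moments in the configuration model, then conditioning on simplicity), which is exactly what the cited references \cite{bollobas1980probabilistic,wormald1999models} do. The present paper does not give its own proof of this lemma; it is stated as a well-known result and merely invoked, so there is nothing further to compare.
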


Let us denote the number of cycles of length $k$ by $X_k.$ Then the number of vertices which has a cycle of length $k$ in their $R$-neighborhood is at most $X_k\cdot k\cdot (d-1)^{R-\lfloor\frac{k+1}{2}\rfloor},$ hence the number of nodes containing any cycle in their $R$-neighborhood is at most $\sum_{k=3}^{2R}X_k\cdot k\cdot (d-1)^{R-\lfloor\frac{k+1}{2}\rfloor}.$ If we fix the number $N$ of the vertices and pick a random graph on $N$ vertices then if we pick a random vertex of this now fixed graph, the probability of taking a vertex which has a cycle in its $R$-neighborhood is at most $\frac1N\cdot \sum_{k=3}^{2R}X_k\cdot k\cdot (d-1)^{R-\lfloor\frac{k+1}{2}\rfloor}.$ By Lemma \ref{lemma:BW_poisson} the variable $X_k$ converges in law when we tend with $N$ to infinity so if we take a sequence of random graphs $(G_l)_{l=1}^\infty$ on increasing number of vertices and $p_l$ is the probability that the $R$-neighborhood of a random vertex of $G_l$ contains a cycle, then $\lim\limits_{l\to\infty}p_l=0$ holds almost surely. Probability $p_l$ also gives an upper bound on the $L^\infty$-distance of the empirical and the calculated distribution of the colored two-neighborhoods. This, in turn, implies that our calculation gives an asymptotically almost surely correct estimation of the empirical distribution of a realization of the algorithm. Note that we did not use anything about the behavior of the process on the vertices close to small cycles.

\section{Applications: maximum cut and internal partitions}\label{application}

Here we discuss some applications of our main result.

\subsection{Maximum cut in d-regular graphs}

 As an application, we show that our algorithm also gives a lower bound on the size of the maximal bisection in regular graphs. 
 
 \begin{proof}[Proof of Theorem \ref{maxcut}]
     
 Let us call our stochastic algorithm $A,$ which constructs a small bisection of a $d$-regular graph $G.$ If the cut in the bisection defined by $A$ includes $\lambda$ fraction of the edges in expectation, then there is another stochastic algorithm $B$ which gives a large bisection for which the cut includes $1-\lambda$ fraction of the edges in expectation. Moreover, the two algorithms are coupled in the sense that each edge of the graph $G$ is included in either the cut generated by algorithm $A$ or the cut generated by algorithm $B$ for a given realization.

 First, we need a definition. We say that a local bisection algorithm is \emph{symmetric} to the color classes if the following holds. There exists a measure-preserving involution $f$ of the space of seeds so that if we apply $f$ on every seed, then the output at the root changes to the other class. Algorithm $A$ is symmetric in this sense: when at step $t,$ the dominant type is symmetric, then $f$ swaps the intervals $(0,q_t]$ and $(q_t,2q_t]$; otherwise, it is the identity.

 The structure of algorithm $B$ is the same as algorithm $A,$ with the following modifications:
 \begin{itemize}
  \item instead of $\textbf s,$ we use $f(\textbf s)$ as seeds.
	\item each vertex is colorblind: it thinks that its neighbors are of the opposite color as they are really (the opposite of red is blue, and the opposite of uncolored is uncolored)
 \end{itemize}
 
 During the first phase, whenever an edge is getting monochromatic according to algorithm $A,$ it gets bicolored according to algorithm $B;$ and the other way around. In the second phase, exactly the same vertices are getting recolored, but they are increasing the size of the bisection instead of decreasing it.
 \end{proof}

\medskip
Note that this argument should not be considered an intuitive argument supporting the
Zdeborov\'a-Boettcher Conjecture \cite{zdeborova2010conjecture}.
Moreover, the first author conjectures the opposite for the following intuitive reason.
The minimum bisection problem could be attacked in the following way.
We first try to separate the vertices into two classes, class Red is slightly bigger than class Blue.
In the size ratio 1:0, the cut size is 0, so it suggests that a slight imbalance helps to construct a smaller cut than the balanced cut.
Now identify the finite sets of vertices in class Red which have exactly the same number of total degree to the two classes.
We expect a positive fraction of the vertices to fall into this category.
If we move some of these finite sets to class Blue, then we can make the construction balanced.

This intuition suggests that the minimum bisection has an asymmetrical structure.
In contrast, the first author expects that the maximum cut has a symmetric structure. 
He conjectures that the sum of the expected sizes of the maximum cut and the minimum bisection is less than the total number of edges.

\subsection{Internal partitions of 5-regular graphs}
We recall that an {\em internal} or {\em friendly} partition of a graph is a partition of the vertices into two nonempty sets so that every vertex has at least as many neighbors in its own class as in the other one. The problem of finding or showing the existence of internal partitions in graphs has a long history. The same concept was introduced by Gerber and Kobler \cite{gerber2004classes} under the name of {\em satisfactory partitions}, while Kristiansen, Hedetniemi, and Hedetniemi \cite{kristiansen2002introduction} considered a related problem on {\em graph alliances}. A survey of Bazgan, Tuza, and Vanderpooten \cite{bazgan2010satisfactory} describes early results on the area and discusses the complexity of the problem as well as how to find such partitions. Let us denote by $d_G(v)$ the degree of vertex $v$ in graph $G$. For a set $U\subset V(G)$, $d_U(v)$ denotes the number of neighbors of $v$ in $U$.\\
Stiebitz \cite{stiebitz1996decomposing} proved that for every pair of functions
$a, b : V \rightarrow \mathbb{N}^+$ such that $ d_G(v) \geq a(v) +b(v) + 1 $ holds for all $v\in V$, there exists a partition of the vertex set 
$V(G) = A\cup B$, such that  $d_A(v) \geq  a(v) \  \forall v \in A$ and  $d_B(v) \geq b(v) \ \forall v \in B$ is satisfied.
This confirms a conjecture of Thomassen \cite{thomassen1983graph} in a strong form. 
An internal or friendly partition of a graph is a partition of the vertex set into two nonempty sets so that every vertex has at least as many neighbors in its own class as in the other one. It has been shown that apart from finitely many counterexamples, every \(3, 4\) or \(6\)-regular graph has an internal partition, due to Shafique, Dutton, and Ban, Linial. 

\begin{theorem}[Shafique-Dutton \cite{shafique2002satisfactory}, Ban-Linial \cite{ban2016internal}]\label{kisr} Let $d\in \{3,4,6\}$. Then, apart from finitely many counterexamples, all $d$-regular graphs have internal partitions. The list of counterexamples is as follows.
\begin{itemize}
  \item for $d=3$, $K_4$ and $K_{3,3}$ do not have an internal partition \cite{shafique2002satisfactory}.
  \item for $d=4$, $K_5$  does not have an internal partition \cite{shafique2002satisfactory}.
  \item for $d=6$, every graph on at least $12$ vertices has an internal partition, thus counterexamples have at most $11$ vertices (and this bound is tight) \cite{ban2016internal}.
\end{itemize}
\end{theorem}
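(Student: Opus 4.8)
The plan is to obtain the theorem from an extremal minimum-cut partition together with local exchanges, pushing all the genuinely hard cases into a finite verification. I would start from the reformulation: an internal partition of a $d$-regular graph $G$ is a partition $V(G)=A\sqcup B$ into two nonempty parts in which no vertex sends a strict majority of its $d$ edges across the cut, i.e.\ $d_A(v)\ge\lceil d/2\rceil$ for all $v\in A$ and $d_B(v)\ge\lceil d/2\rceil$ for all $v\in B$; for $d\in\{3,4\}$ this means each part induces a subgraph of minimum degree at least $2$, and for $d=6$ minimum degree at least $3$. If $G$ is disconnected I would split its connected components into two nonempty groups $A,B$; then every vertex has all $d$ of its neighbours on its own side, so $(A,B)$ is internal. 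Hence it suffices to treat connected $G$, and every potential counterexample is connected.

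For connected $G$ I would then pick a partition $(A,B)$ of $V(G)$ into two parts \emph{each of size at least $2$} that minimises the number $e(A,B)$ of crossing edges. If some vertex $v$ in a part of size at least $3$, say $v\in A$, had $d_A(v)<d/2$, then moving $v$ to $B$ would keep both parts of size at least $2$ and change the cut by $2d_A(v)-d<0$, contradicting minimality; so in the extremal partition every vertex lying in a part of size at least $3$ already satisfies the internal condition (for the single odd value $d=3$, $d_A(v)<3/2$ forces $d_A(v)\le 1$, exactly the negation of $d_A(v)\ge 2=\lceil 3/2\rceil$). Therefore \emph{if the extremal partition has both parts of size at least $3$, it is an internal partition.} A part of size $1$ produces $e(A,B)=d$ and a part of size $2$ produces $e(A,B)\ge 2d-2>d$ (since $d\ge 3$), so the extremal partition can fail to have both parts of size at least $3$ only if $G$ has no edge-cut of fewer than $2d-2$ edges separating $V(G)$ into two parts of size at least $3$; in particular such a $G$ is then $d$-edge-connected, and more strongly every edge-cut of $G$ whose two sides each contain at least $3$ vertices has at least $2d-2$ edges.

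Next I would show that a connected $d$-regular graph with this ``no sparse balanced cut'' property and no internal partition has at most $N_0(d)$ vertices for an explicit constant $N_0(d)$. The mechanism is to feed candidate dense sets $S$ back into the previous step: whenever $|S|\ge 3$, $|V(G)\setminus S|\ge 3$ and $e(S,V(G)\setminus S)<2d-2$, the pair $(S,V(G)\setminus S)$ is already internal, so $G$ contains no such $S$. Running this with $S=N[v]$, with $S=N[u]\cup N[v]$ for adjacent $u,v$, and with small modifications of these, each excluded configuration imposes an upper bound on a local density; combined with $d$-regularity and connectivity these bounds cap the diameter and hence the order of $G$. For $d=3,4$ the bound $N_0(d)$ comes out as a small absolute constant, and carrying the same bookkeeping through for $d=6$ should yield $N_0(6)=11$. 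The few residual small graphs not ruled out by a local-density witness (notably $K_{d+1}$ and other tightly knit graphs) I would treat individually, using that any $d$-edge-cut $(S,V(G)\setminus S)$ with $G[S]$ and $G[V(G)\setminus S]$ both of minimum degree at least $\lceil d/2\rceil$ is itself an internal partition, and that for bridgeless cubic $G$ one may delete a perfect matching (Petersen's theorem) to get a spanning $2$-factor: if that $2$-factor is disconnected, placing one of its cycles in $A$ and the rest in $B$ is internal, so the residual cubic graphs are those for which every perfect matching leaves a Hamiltonian $2$-factor --- and the technical core of the $d=3$ case is to show these are exactly $K_4$ and $K_{3,3}$.

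The proof is then finished by a finite verification for $d$-regular graphs of order at most $N_0(d)$: for $d=3$ one inspects the cubic graphs on at most six vertices and finds that only $K_4$ and $K_{3,3}$ fail (for $K_{3,3}$: any part of size at most $3$ induces a subgraph with a vertex of degree at most $1$, while a part of size $4$ leaves a part of size $2$); for $d=4$ one inspects the $4$-regular graphs on at most five vertices and finds only $K_5$; for $d=6$ one enumerates all $6$-regular graphs on at most eleven vertices --- a finite, computer-assisted search --- and reads off the list of those without an internal partition, which is nonempty, has every member on at most $11$ vertices, and attains the value $11$. I expect the main obstacle to be precisely this last part together with the order bound preceding it: making the local-density elimination quantitatively sharp enough that all survivors genuinely lie below $N_0(d)$, and in particular that $N_0(6)=11$, and then certifying the exact list of exceptions. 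The $d=6$ case concentrates essentially all of the difficulty, since there the exceptional graphs are genuinely sporadic rather than forming a single explicit family.
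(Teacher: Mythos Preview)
The paper does not prove this theorem; it is quoted as a known result with citations to Shafique--Dutton and Ban--Linial, and is used only to contextualise the paper's own contribution for $d=5$. There is therefore no proof in the paper to compare your proposal against.

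On the plan itself: the minimum-cut opening is sound --- among partitions with both sides of size at least $2$, a cut-minimiser has every vertex lying in a side of size $\ge 3$ satisfying the internal condition, so a connected counterexample must have every $(\ge 3,\ge 3)$-cut of size at least $2d-2$. The difficulty is that everything after this is a wish rather than an argument. You do not derive any bound $N_0(d)$: the sentence about local densities ``capping the diameter and hence the order'' is not a proof, and there is no evident mechanism by which the witnesses $N[v]$ and $N[u]\cup N[v]$ alone force a finite order, let alone the sharp value $N_0(6)=11$; Ban--Linial's actual argument for $d=6$ is structurally different and does not proceed by finite enumeration below an order threshold. Your separate route for $d=3$ also contains a concrete error: you reduce to bridgeless cubic graphs in which every $2$-factor is a single Hamilton cycle and assert that these are exactly $K_4$ and $K_{3,3}$, but this is false --- the Heawood graph has this property, and there are in fact infinitely many such $2$-factor Hamiltonian cubic graphs --- so the Petersen-matching reduction does not isolate the genuine exceptions, and the cubic case would need a different finishing argument.
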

The general conjecture states that this holds for
the family of \(d\)-regular graphs as well, for all fixed \(d\).
For even valency, Linial and Louis proved \cite{linial2020asymptotically} that almost all $d$ regular graphs admit an internal partition if $d$ is even. Note that for even valency, $d=d(v)=d(v)/2+d(v)/2$ while for odd valency, $d=d(v)=\lceil d(v)/2\rceil +\lceil d(v)/2\rceil -1$, which points out the difference in difficulty depending on the parity, in view of Stiebitz's result \cite{stiebitz1996decomposing}.

Note that in the dense setting, an even stronger statement holds true. Very recently, Minzer, Sah, and Sawhney showed that in the Erdős-Rényi random graph $G\sim \mathbb{G}(n,1/2)$, there is a friendly partition with high probability which is an equipartition such that each vertex has $c\sqrt{n}$ more neighbors in its own part than in the other part \cite{minzer2024perfectly}.

This improved the previous work of Ferber, Kwan, Narayanan, Sah, and Sawhney \cite{ferber2022friendly}.

\begin{proof}[Proof of Theorem \ref{internal}]
Following the observation of Bazgan, Tuza, and Vanderpooten \cite{bazgan2010satisfactory},  Bärnkopf, Nagy, and Paulovics \cite{barnkopf2024note} proved that if there exists a bisection of a \(5\)-regular simple
graph $G$ of size at most \(n/2 + 5\), then there exists an internal partition for \(G\). 
Due to our Theorem \ref{bisect}, we know that the bisection width of random \(5\)-regular graphs of size \(n\) is asymptotically almost surely below \(0.496488n\), which implies Theorem \ref{internal}.
\end{proof}

\begin{remark}
This approach does not seem to be applicable for larger odd values of $d$, as we can not expect to have the bisection width to be under $0.5$ for $d$-regular graphs for $d\geq 7$. Observe that the result of Díaz, Serna, and Wormald was almost strong enough to show this, but as they proved that the bisection width is at most $0.5028n$ a.a.s. for $5$-regular graphs.
\end{remark}

\section{Conclusion}
It is natural to discuss the limitations of the proposed algorithm in terms of its effectiveness, both computationally and performance-wise.

Let us highlight first that while applying the second, recoloring phase, actually we could swap much more vertices than what we obtain from Theorem \ref{biholeswap}.
Indeed, the improvement in the size of the vertex cut in the $5$-regular case via the application of Theorem \ref{biholeswap} counted as if the neighbors of\textit{ every miscolored red or blue vertex} of terminal type $(2,3)$ or $(3,2)$ in the opposite class are also miscolored  blue resp. red vertices of terminal type $(3,2)$ resp. $(2,3)$. Actually, 
the majority of them have \textit{zero such neighbors} with high probability, which means that these vertices could have been all added to the largest bihole in the associated bipartite graph. For comparison, we calculated with approximately the third of them using the general result of \cite{axenovich2021bipartite}.
The expected size of these could be determined as well using our factor of IID approach, however to do so, we should monitor the evolution of the second neighborhoods, which would require much more involved and much more transmission formulae.

This leads us to the second, and possibly more significant room for improvement. 
This is to improve the local algorithms themselves by introducing a much more refined rule for the local coloring, based on the color distribution of the second neighborhood $N[N[v]]$ or even larger neighborhoods. 
While in theory, we could derive further improvement with our approach in such a way, the amount of calculations would set some limitations in this direction.\\

\textbf{Acknowledgement} We are grateful to Marcin Bria\'nski for the fruitful conversation on the topic in the preliminary stages of the project. 

\bibliographystyle{abbrv}
\bibliography{sample}

\end{document}